% ------------------------------------------------------------------------
% ***************************** S.A.Seyed Fakhari *****************************
% ------------------------------------------------------------------------
% ******* This is a journal template file for use with AMS-LaTeX. ********
% ------------------------------------------------------------------------

\documentclass[12pt]{amsart}

\usepackage{amsmath}
\usepackage{amssymb}
\usepackage{amsfonts}
\usepackage{amsthm}
\usepackage{enumerate}
\usepackage{hyperref}
\usepackage{color}
%\usepackage{psfrag}
%\usepackage[all]{xy}

% Setup Environments -----------------------------------------------------
\textheight=600pt
\textwidth=435pt
\oddsidemargin=17pt
\evensidemargin=17pt

% Theorem Environments ---------------------------------------------------
\theoremstyle{plain}
\newtheorem{thm}{Theorem}[section]

\newtheorem{prop}[thm]{Proposition}
\newtheorem{lem}[thm]{Lemma}

\theoremstyle{definition}
\newtheorem{dfn}[thm]{Definition}

\newtheorem{dfns-rems}[thm]{Definitions and Remarks}
\newtheorem{notas-rems}[thm]{Notations and Remarks}
\newtheorem{exmps-rems}[thm]{Examples and Remarks}

%%%%%%%%%%%%%%%%%%%%%%%%%%%%%%%%%%%%%%%%%%%%%%%%%%%%%%%%%%%%%%%%%%%%%%%%%%

\begin{document}

% ------------------------------------------------------------------------

\title[Regularity of powers of edge ideal of whiskered cycles]{Regularity of powers of edge ideal of whiskered cycles}

% ------------------------------------------------------------------------

\author[M. Moghimian]{M. Moghimian}

\address{M. Moghimian, Science and Research Branch, Islamic Azad University
(IAU), Tehran, Iran.}

\email{math\_moghimi@yahoo.com}

\author[S. A. Seyed Fakhari]{S. A. Seyed Fakhari}

\address{S. A. Seyed Fakhari, School of Mathematics, Institute for Research
in Fundamental Sciences (IPM), P.O. Box 19395-5746, Tehran, Iran.}

\email{fakhari@ipm.ir}

\urladdr{http://math.ipm.ac.ir/$\sim$fakhari/}

\author[S. Yassemi]{S. Yassemi}

\address{S. Yassemi, School of Mathematics, Statistics and Computer Science,
College of Science, University of Tehran, Tehran, Iran, and School of
Mathematics, Institute for Research in Fundamental Sciences (IPM), P.O. Box
19395-5746, Tehran, Iran.}

\email{yassemi@ipm.ir}

\urladdr{http://math.ipm.ac.ir/$\sim$yassemi/}

% ------------------------------------------------------------------------

\begin{abstract}
Let $G=W(C_{n})$ be a whiskered cycle graph with edge ideal $I=I(G)$. We prove that for every $s\geq 1$, the equality ${\rm reg}(I^{s})=2s+\lceil \frac{n-1}{2}\rceil-1$ holds.
\end{abstract}

% ------------------------------------------------------------------------

\subjclass[2000]{Primary: 13D02, 05E40 Secondary: 05C38}

% ------------------------------------------------------------------------

\keywords{Edge ideal, Castelnuovo-Mumford regularity, Even-connected path, Whiskered cycle graph}

% ------------------------------------------------------------------------

\thanks{}

% ------------------------------------------------------------------------

\maketitle

%%%%%%%%%%%%%%%%%%%%%%%%%%%%%%%%%%%%%%%%%%%%%%%%%%%%%%%%%%%%%%%%%%%%%%%%%%

\section{Introduction and Preliminaries} \label{sec1}

Let $I$ be a homogeneous ideal in the polynomial ring $R = \mathbb{K}[x_1,\ldots,x_n]$. Suppose that the minimal free resolution of $I$ is given by
$$0  \longrightarrow \cdots \longrightarrow  \bigoplus_{j}R(-j)^{\beta_{1,j}(I)} \longrightarrow \bigoplus_{j}R(-j)^{\beta_{0,j}(I)}   \longrightarrow  I \longrightarrow 0.$$
The Castelnuovo-Mumford regularity (or simply, regularity) of $I$, denote by ${\rm reg}(I)$, is defined as follows:
$${\rm reg}(I)=\max\{j-i|\ \beta_{i,j}(I)\neq0\}.$$
The regularity of $I$ is an important invariant in commutative algebra.

There is a natural correspondence between quadratic squarefree monomial ideals of $R$ and finite simple graphs with $n$ vertices. To every simple graph $G$ with vertex set $V(G)=\{x_1,\ldots,x_n\}$ and edge set $E(G)$, we associate an ideal $I=I(G)$ defined by
$$I(G)=\big(x_{i}x_{j}:\{x_{i}, x_{j}\}\in E(G)\big)\subseteq R.$$
Computing and finding bounds for the regularity of edge ideals and their powers have been studied by a number of researchers (see for example\cite{ab}, \cite{b}, \cite{bht}, \cite{c}, \cite{fmo}, \cite{J} and \cite{mv}).
It is well-known that ${\rm reg}(I^{s})$ is asymptotically a linear function for $s\gg0$. When $I=I(G)$ is an edge ideal, then there exist integers $b$ and $s_{0}$ such that for all $s\geq s_{0}$, ${\rm reg}(I^{s})=2s+b.$
The simplest case is when $b=0$, i.e ${\rm reg}(I^{s})=2s$, in this case, $I^{s}$ has linear minimal free resolution. For example, Banerjee \cite{b} proves that if $G$ is a gap-free and cricket-free gragh then ${\rm reg}(I(G)^{s})=2s$.
Using \cite[Theorem 3.2]{hhz} together with Fr\"{o}berg's result \cite{f}, we know that if the complement graph of $G$ is chordal, then $I(G)^{s}$ has linear resolution for all $s\geq1$.

Recently, Beyarslam, H${\rm \grave{a}}$ and Trung \cite{bht} proved that for every graph $G$ and every integer $s\geq 1$, the inequality$${\rm reg}(I(G)^{s})\geq 2s+{\rm indmatch}(G)-1$$holds, where indmatch$(G)$ denotes the induced matching number of $G$, and it is the maximum cardinality of the induced matching of $G$ (see \cite[Theorem 4.5]{bht}). In the same paper, the authors  proved the equality for every $s\geq 1$, if $G$ is a forest and for every $s\geq 2$, if $G$ is a cycle (see \cite[Theorems 4.7 and 5.2]{bht}). In this paper, we determine a  new class of graphs for which the equality$${\rm reg}(I(G)^{s})= 2s+{\rm indmatch}(G)-1$$holds, for every $s\geq 1$.

Let $G$ be a graph. Adding a whisker to $G$ at a vertex $v$ means adding a new vertex $u$ and the edge $\{u,v\}$ to $G$. The graph which is obtained from $G$ by adding a whisker to each vertex of $G$ is denoted by $W(G)$. For every integer $n\geq 3$, let $C_{n}$ be the $n$-cycle graph and set $G=W(C_{n})$. It immediately follows from \cite[Theorem 13]{bv} that ${\rm reg}(I(G))=\lceil \frac{n-1}{2}\rceil+1$ and one can easily check that ${\rm indmath}(G)=\lceil \frac{n-1}{2}\rceil$. Thus, we have the following result.

\begin{prop} \label{first}
For every integer $n\geq 3$, let $C_{n}$ be the $n$-cycle graph and set $G=W(C_{n})$. Then ${\rm reg}(I(G))=\lceil \frac{n-1}{2}\rceil+1={\rm indmath}(G)+1$
\end{prop}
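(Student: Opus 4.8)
The plan is to establish Proposition~\ref{first} by combining two independent computations, exactly as the statement already hints: the regularity claim $\mathrm{reg}(I(G)) = \lceil \frac{n-1}{2}\rceil + 1$ follows by citing the external result, and the combinatorial identity $\mathrm{indmatch}(G) = \lceil \frac{n-1}{2}\rceil$ must be verified directly from the structure of the whiskered cycle. Since the regularity half is quoted wholesale from \cite[Theorem 13]{bv}, the real content I would write out is the induced matching count, after which the final equality $\mathrm{reg}(I(G)) = \mathrm{indmatch}(G) + 1$ is immediate by substitution.

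For the induced matching number, first I would fix notation: write $C_n$ with vertices $v_1, \ldots, v_n$ and cycle edges $\{v_i, v_{i+1}\}$ (indices mod $n$), and let $u_i$ denote the whisker vertex attached to $v_i$, so $G = W(C_n)$ has the extra pendant edges $\{u_i, v_i\}$. The key observation is that the $n$ whisker edges $\{u_i, v_i\}$ are pairwise \emph{non}-induced-matchable in full, because consecutive cycle vertices $v_i, v_{i+1}$ are adjacent; but a whisker edge at $v_i$ together with a whisker edge at $v_j$ forms an induced matching precisely when $v_i$ and $v_j$ are non-adjacent in $C_n$. So the task reduces to choosing a maximum set of pairwise non-adjacent vertices of $C_n$ and taking their whiskers — that is, an independent set in $C_n$. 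The maximum independent set of the $n$-cycle has size $\lfloor \frac{n}{2}\rfloor$, which equals $\lceil \frac{n-1}{2}\rceil$, giving the lower bound $\mathrm{indmatch}(G) \geq \lceil \frac{n-1}{2}\rceil$.

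For the matching upper bound, I would argue that no induced matching can do better. Any edge of $G$ is either a whisker edge $\{u_i,v_i\}$ or a cycle edge $\{v_i,v_{i+1}\}$; in either case the edge ``occupies'' at least one cycle vertex (the whisker edge occupies $v_i$, the cycle edge occupies two). Given an induced matching $M$, assign to each edge of $M$ a representative cycle vertex it meets. Two edges of $M$ meeting cycle vertices that are equal or adjacent would violate the induced condition (their endpoints would be joined by a cycle edge), so the chosen representative vertices form an independent set in $C_n$, whence $|M| \leq \lfloor \frac{n}{2}\rfloor = \lceil \frac{n-1}{2}\rceil$. I expect the main obstacle to be nothing conceptually deep but rather the bookkeeping around parity and the small cases (especially $n=3,4$, and the distinction between odd and even $n$ where $\lfloor \frac{n}{2}\rfloor$ and $\lceil \frac{n-1}{2}\rceil$ coincide); verifying that the floor/ceiling identity $\lfloor \frac{n}{2}\rfloor = \lceil \frac{n-1}{2}\rceil$ holds for all $n \geq 3$ and that the independent-set bound is tight in every residue class is where the care is needed, though each step is elementary.
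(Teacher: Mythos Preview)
Your proposal is correct and follows exactly the paper's approach: the paper derives the regularity from \cite[Theorem~13]{bv} and asserts that ${\rm indmatch}(W(C_n))=\lceil\frac{n-1}{2}\rceil$ with the phrase ``one can easily check,'' while you supply the (correct) independent-set argument that fills in this check. There is nothing to add.
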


As the main result of this paper, we extend Proposition \ref{first} by proving than for every $s\geq 1$, ${\rm reg}(I(W(C_n))^{s})=2s+\lceil \frac{n-1}{2}\rceil-1$ (see Theorem \ref{mmmain}).

We first need to recall some basic definitions from graph theory.

If two vertices of a graph $G$ are joined by an edge then these vertices are called neighbors. The set of all neighbors of a vertex $v$ is called the neighborhood set of $v$ and is denoted by $N(v)$. Moreover, we set $N[v]=N(v)\cup\{v\}$. The degree of a vertex $v$ of $G$ is the number of its neighbors and is denoted by ${\rm deg}_{G}(v)$.

A subgraph $H$ of $G$ is called induced provided that two vertices of $H$ are adjacent if and only if they are adjacent in $G$. For an edge $e$ in $G$, let $G\setminus e$ be the subgraph of $G$ obtained by removing the edge $e$ from $G$. For a subset $S\subseteq V(G)$ of vertices, let $G\setminus S$ be the induced subgraph on $V(G)\setminus S$ and we write $G\setminus v$ instead of $G\setminus \{v\}$.
 For an edge $e=\{u,v\}$ of $G$, we set $N_{G}[e]=N_{G}[u]\cup N_{G}[v]$ and define $G_{e}$ to be the induced subgraph $G\setminus N_{G}[e]$ of $G$.

\textbf{Convention}: Throughout this paper, for simplicity, we use ${\rm reg}(G)$ instead of ${\rm reg}(I(G))$.

We need the following result due to H${\rm \grave{a}}$ \cite{h} on the regularity of edge ideals.

\begin{thm}\label{change}
Let $G=(V(G),E(G))$ be a graph.
\begin{itemize}
\item[(i)] \cite[Lemma 3.1]{h} For any induced subgraph $H$ of $G$ we have ${\rm reg}(H)\leq {\rm reg}(G)$.

\item[(ii)] \cite[Theorem 3.4]{h} Let $x\in V(G)$. Then$${\rm reg}(G)\leq \max\{{\rm reg}(G\setminus x), {\rm reg}(G\setminus N[x])+1\}.$$

\item[(iii)] \cite[Theorem 3.5]{h} Let $e\in E(G)$. Then$${\rm reg}(G)\leq \max\{2, {\rm reg}(G\setminus e), {\rm reg}(G_{e})+1\}.$$
\end{itemize}
\end{thm}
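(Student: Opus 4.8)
The three inequalities are the standard inductive machinery for regularity of edge ideals; I would reconstruct them separately, using Hochster's formula for part (i) and short exact sequences of the relevant edge ideals for parts (ii) and (iii). Throughout write $I=I(G)$ and recall $\mathrm{reg}(I)=\mathrm{reg}(R/I)+1$; since the convention here is $\mathrm{reg}(G)=\mathrm{reg}(I(G))$, I will prove each estimate for $R/I$ and translate at the end. For (i) the plan is to pass to the Stanley--Reisner picture: $I(G)$ is the Stanley--Reisner ideal of the independence complex $\mathrm{Ind}(G)$, and for an induced subgraph $H$ on a vertex set $U\subseteq V(G)$ one has $\mathrm{Ind}(H)=\mathrm{Ind}(G)_U$, the restriction of $\mathrm{Ind}(G)$ to $U$. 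By Hochster's formula the graded Betti numbers of $R/I(G)$ are governed by the reduced homology of the induced subcomplexes $\mathrm{Ind}(G)_W$ with $W\subseteq V(G)$, so that $\mathrm{reg}(R/I(G))=1+\max\{\ell:\widetilde H_\ell(\mathrm{Ind}(G)_W;\mathbb K)\neq 0\ \text{for some } W\}$. Because every restriction of $\mathrm{Ind}(H)$ is also a restriction of $\mathrm{Ind}(G)$ (the sets $W\subseteq U$ form a subfamily of the sets $W\subseteq V(G)$), the maximum defining $\mathrm{reg}(H)$ ranges over a smaller family, and $\mathrm{reg}(H)\le\mathrm{reg}(G)$ follows immediately.

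For (ii) I would use the short exact sequence
\[
0\longrightarrow \bigl(R/(I:x)\bigr)(-1)\xrightarrow{\ \cdot x\ } R/I\longrightarrow R/(I,x)\longrightarrow 0,
\]
together with the identifications $(I,x)=I(G\setminus x)+(x)$ and $(I:x)=\bigl(y:y\in N_G(x)\bigr)+I(G\setminus N[x])$. The first yields $\mathrm{reg}(R/(I,x))=\mathrm{reg}(G\setminus x)-1$ (killing the free variable $x$), and the second yields $\mathrm{reg}(R/(I:x))=\mathrm{reg}(G\setminus N[x])-1$ (the linear forms kill the variables of $N(x)$, and the leftover free variable $x$ is harmless). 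Feeding these into the standard estimate $\mathrm{reg}(R/I)\le\max\{\mathrm{reg}(R/(I:x))+1,\ \mathrm{reg}(R/(I,x))\}$ coming from the long exact sequence in $\mathrm{Tor}$, and adding $1$, produces the claimed bound.

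For (iii) the same idea applies to the single generator $uv$ of $I$ attached to $e=\{u,v\}$: writing $I=I(G\setminus e)+(uv)$ I would use
\[
0\longrightarrow \bigl(R/(I(G\setminus e):uv)\bigr)(-2)\xrightarrow{\ \cdot uv\ } R/I(G\setminus e)\longrightarrow R/I\longrightarrow 0,
\]
together with $(I(G\setminus e):uv)=\bigl(w:w\in (N_G(u)\cup N_G(v))\setminus\{u,v\}\bigr)+I(G_e)$, which gives $\mathrm{reg}(R/(I(G\setminus e):uv))=\mathrm{reg}(G_e)-1$. The sequence then forces $\mathrm{reg}(R/I)\le\max\{\mathrm{reg}(G_e),\ \mathrm{reg}(G\setminus e)-1\}$; the degree-$2$ shift here is exactly what produces the floor $2$ (which is needed when $G_e$ is empty, in which case $\mathrm{reg}(G)\ge 2$ already because $I$ is generated in degree $2$), and translating back to $\mathrm{reg}(G)$ gives the stated inequality.

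The main obstacle is not the homological algebra, which is routine once the sequences are written, but the bookkeeping of the colon ideals and the degree shifts. One must check carefully that $(I:x)$ and $(I(G\setminus e):uv)$ collapse, after discarding the linear forms coming from the neighborhoods, to exactly the edge ideals of $G\setminus N[x]$ and of $G_e=G\setminus N_G[e]$, and one must keep precise track of the twists ($-1$ for a vertex, $-2$ for an edge) that are responsible for the $+1$ terms and for the floor $2$. Once these identifications and shifts are verified, the three bounds drop out of the comparison of resolutions (equivalently, a mapping-cone argument).
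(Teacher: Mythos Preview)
Your proof sketch is correct and follows the standard route (Hochster's formula for (i); the colon/sum short exact sequences with the appropriate identifications of $(I:x)$ and $(I(G\setminus e):uv)$ for (ii) and (iii)). Note, however, that the paper does not give its own proof of this theorem: it is quoted verbatim from H\`a's survey \cite{h} (Lemma~3.1, Theorem~3.4, Theorem~3.5) and used as a black box, so there is no in-paper argument to compare against.
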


Our method for proving the main result is based on the recent work of Banerjee \cite{b}.
We recall the following definition and theorem from \cite{b}.

\begin{dfn}
Let $G$ be a graph. Two vertices $u$ and $v$ ($u$ may be equal to $v$) are said to be even-connected with respect to an $s$-fold product $e_{1} \ldots e_{s}$ of edges of $G$, if there is a path $p_{0}, p_{1}, \ldots, p_{2l+1}$, $l\geq 1$ in $G$ such that the following conditions hold:
\begin{itemize}
\item[(i)] $p_{0}=u$ and $p_{2l+1}=v$;

\item[(ii)] for all $0\leq k\leq l-1, \{p_{2k+1},p_{2k+2}\}=e_{i}$ for some \emph{i}; and

\item[(iii)] for all \emph{i}, $\mid \{k\mid \{p_{2k+1},p_{2k+2}\}=e_{i}\}\mid \leq\mid\{j\mid e_{i}=e_{j}\}\mid$.
\end{itemize}
\end{dfn}

\begin{thm}\cite[ Theorems 6.1 and 6.7]{b} \label{increase}

Assume that $s\geq 1$ is an integer, $G$ is a graph and $I=I(G)$ is its edge ideal. Let $M$ be a minimal generator of $I^{s}$. Then the ideal $(I^{s+1}:M)$ is generated by monomials of degree two and for every generator $uv$ ($u$ may be equal to $v$) of this ideal, either $\{u,v\}$ is an edge of $G$ or $u$ and $v$ are even-connected with respect to $M$.
\end{thm}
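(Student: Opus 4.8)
The plan is to prove the two assertions together: that $(I^{s+1}:M)$ is a monomial ideal generated in degree two, and that each quadratic generator records either an edge of $G$ or an even-connection. First observe that the colon of a monomial ideal by a monomial is again a monomial ideal, and that $I\subseteq (I^{s+1}:M)$ since $M\in I^{s}$; thus every edge of $G$ already lies in the colon, which accounts for the ``edge'' alternative. Writing $M=e_{1}\cdots e_{s}$ as a product of $s$ edges, the whole theorem reduces to one combinatorial lemma: if $m$ is any monomial with $mM\in I^{s+1}$, then there are variables $u,v$ (possibly $u=v$) with $uv\mid m$ such that $uv\cdot M\in I^{s+1}$ and $\{u,v\}$ is either an edge of $G$ or even-connected with respect to $M$. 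Granting the lemma, any minimal monomial generator of the colon must itself be such a quadratic $uv$ (a longer generator would be a proper multiple of one), while a degree comparison rules out degree-one generators because every element of $I^{s+1}$ has degree at least $2s+2>2s+1$; this simultaneously yields degree-two generation and the stated description of the generators.

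To prove the lemma I would translate $mM\in I^{s+1}$ into combinatorics on variable occurrences. Since $mM\in I^{s+1}$, there are edges $g_{1},\ldots,g_{s+1}$ with $g_{1}\cdots g_{s+1}\mid mM$; fix an injection assigning each of the $2s+2$ variable occurrences of $g_{1}\cdots g_{s+1}$ to a matching occurrence of $m\cdot e_{1}\cdots e_{s}$. On the set of occurrences of $mM$ I build a multigraph $H$ with two kinds of edges: an \emph{$e$-edge} joining the two occurrences inside each $e_{i}$, and a \emph{$g$-edge} joining the two occurrences to which each $g_{j}$ is assigned. Every occurrence coming from $M$ lies in exactly one $e$-edge, every occurrence coming from $m$ lies in no $e$-edge, and each occurrence lies in at most one $g$-edge; hence every vertex of $H$ has degree at most two, so $H$ is a disjoint union of paths and cycles whose edges alternate between the two types at every internal vertex.

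The key step is a parity count. There are $s+1$ $g$-edges and $s$ $e$-edges, so $H$ has exactly one more $g$-edge than $e$-edge. Classifying each path component by the types of its two end-edges, a component whose two end-edges are both of type $g$ (equivalently, both endpoints are occurrences of $m$) contributes $+1$ to (number of $g$-edges)$-$(number of $e$-edges), a component with both end-edges of type $e$ contributes $-1$, and every cycle or mixed path contributes $0$. Summing over all components, the number of all-$g$-ended path components minus the number of all-$e$-ended ones equals $1$, so at least one path component $P$ has both endpoints among the occurrences of $m$. Reading $P$ off from one endpoint $u$ to the other $v$ gives an alternating sequence $u,\,g,\,e,\,g,\,\ldots,\,g,\,v$ whose $g$-steps are edges of $G$ and whose $e$-steps are edges $e_{i}$ used in $M$; this is exactly the shape required of an even-connection, the degenerate case of a single $g$-step being precisely the case $\{u,v\}\in E(G)$.

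It then remains to check the bookkeeping. Condition (iii) of even-connection holds automatically, since the $e$-steps of $P$ use pairwise distinct occurrences and hence distinct copies of each repeated edge of $M$, so no edge is used more often than its multiplicity in the product $M$. Finally, if $P$ uses the $l+1$ $g$-edges $h_{0},\ldots,h_{l}$ and the $l$ edges $e_{i_{1}},\ldots,e_{i_{l}}$, then comparing the two labellings of the occurrences lying on $P$ gives the monomial identity $u\,v\,e_{i_{1}}\cdots e_{i_{l}}=h_{0}h_{1}\cdots h_{l}$, whence $uv\,M=(h_{0}\cdots h_{l})\cdot\prod_{i\notin\{i_{1},\ldots,i_{l}\}}e_{i}$ is a product of $(l+1)+(s-l)=s+1$ edges and so lies in $I^{s+1}$; thus $uv\in(I^{s+1}:M)$, completing the lemma. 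I expect the main obstacle to be precisely this occurrence-level bookkeeping: keeping the parity count honest when $M$ repeats edges and when $m$ repeats variables, and verifying that the fixed assignment of $g$-occurrences really produces an alternating path whose reassembled product recovers a genuine $s+1$ edges.
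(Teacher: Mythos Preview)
The paper does not give its own proof of this theorem; it is quoted verbatim from Banerjee \cite[Theorems 6.1 and 6.7]{b}, so there is no in-paper argument to compare against. Judged on its own, your argument is correct and is essentially a clean reformulation of Banerjee's original proof: the ``occurrence multigraph'' with $e$-edges and $g$-edges and the parity count $(s+1)-s=1$ forcing an alternating path with both endpoints in $m$ is exactly the mechanism Banerjee uses (he phrases it via a coloring/matching of the factors of $mM$), and your verification that $uv\cdot e_{i_1}\cdots e_{i_l}=h_0\cdots h_l$ is the standard way to extract $uv\in(I^{s+1}:M)$ from that path. The bookkeeping you flag as the potential obstacle---repeated edges in $M$ and repeated variables in $m$---is handled correctly by working at the level of occurrences rather than variables, and condition~(iii) in the definition of even-connection is indeed automatic once the $e$-steps are pairwise distinct occurrences of factors of $M$.
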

%%%%%%%%%%%%%%%%%%%%%%%%%%%%%%%%%%%%%%%%%%%%%%%%%%%%%%%%%%%%%%%%%%%%%%%%%%

\section{Main results} \label{sec2}

The aim of this section is to prove that for every $s\geq 1$, we have$${\rm reg}(I^{s})=2s+\lceil\frac{n-1}{2}\rceil-1,$$where $I$ is the edge ideal of a whiskered cycle $W(C_{n})$ (see Theorem \ref{mmmain}). We first need the following technical lemma. The proof of this lemma is long and we postpone it to the next section.

\begin{lem}\label{choose}
Let $G$ be a graph with $2n$ vertices, say $\{x_{1},x_{2}, \ldots,x_{2n}\}$. Assume that
\begin{itemize}

\item[(i)] $\{x_{i},x_{i+1}\}\in E(G)$ for every $1\leq i\leq n-1$.

\item[(ii)] $\{x_{i},x_{n+i}\}\in E(G)$ for every $1\leq i\leq n$.

\item[(iii)] For $2\leq i,j\leq n-1$, if $x_{n+i}$ is adjacent to $x_{n+j}$ in $G$, then $\{x_{n+i},x_{j-1}\}$, $\{x_{n+i},x_{j+1}\}$, $\{x_{n+j},x_{i-1}\}$ and $\{x_{n+j},x_{i+1}\}$ are edges of $G$.

\item[(iv)] For $2\leq i\leq n-1$, if $x_{n+i}$ is adjacent to $x_{n+1}$ in $G$, then $\{x_{n+i},x_{2}\}$, $\{x_{n+1},x_{i-1}\}$ and $\{x_{n+1},x_{i+1}\}$ are edges of $G$.

\item[(v)] For $2\leq i\leq n-1$, if $x_{n+i}$ is adjacent to $x_{2n}$ in $G$, then $\{x_{n+i},x_{n-1}\}$, $\{x_{i-1},x_{2n}\}$ and $\{x_{i+1},x_{2n}\}$ are edges of $G$; and

\item[(vi)] If $x_{n+1}$ and $x_{2n}$ are adjacent in $G$, then $\{x_{n+1},x_{n-1}\}$ and $\{x_{2},x_{2n}\}$ are edges of $G$.
\end{itemize}
Then $${\rm reg}(G)\leq \lceil\frac{n+2}{2}\rceil.$$
\end{lem}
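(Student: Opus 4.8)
The plan is to induct on $n$, using the vertex-deletion bound of Theorem~\ref{change}(ii) to peel off one end of the path together with its whisker, thereby reducing to instances of the same lemma on fewer vertices. For small $n$ (say $n\le 3$) the hypotheses force $G$ to be, up to isolated vertices, a short whiskered path or a small cycle, and one checks the inequality ${\rm reg}(G)\le\lceil\frac{n+2}{2}\rceil$ directly; these serve as the base of the induction.

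For the inductive step I would first apply Theorem~\ref{change}(ii) to the endpoint $x_1$ of the path, giving ${\rm reg}(G)\le\max\{{\rm reg}(G\setminus x_1),\ {\rm reg}(G\setminus N[x_1])+1\}$. The whisker $x_{n+1}$ is the only vertex that might survive $G\setminus x_1$ without belonging to a smaller whiskered path, so if $x_{n+1}$ has become isolated I discard it and recognize $G\setminus x_1$ as an instance of the lemma for the parameter $n-1$; otherwise I apply Theorem~\ref{change}(ii) a second time to $x_{n+1}$ inside $G\setminus x_1$. In every branch the residual graph is, after deleting isolated vertices and relabelling, a disjoint union of whiskered paths whose total length is strictly smaller than $n$.

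The bound then follows by arithmetic. Writing $B(m)=\lceil\frac{m+2}{2}\rceil$ for the target value on $2m$ vertices, the inductive hypothesis bounds each connected piece, and the regularity of a disjoint union equals the sum of the regularities of its components diminished by the number of components minus one. Since $B(n-1)\le B(n)$ and $B(n-2)+1=\lceil\frac{n}{2}\rceil+1=\lceil\frac{n+2}{2}\rceil=B(n)$, every term appearing in the nested maxima is at most $B(n)$, which yields ${\rm reg}(G)\le B(n)$.

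The main obstacle is the verification that each residual graph again satisfies hypotheses (i)--(vi) for its smaller parameter; this is exactly where conditions (iii)--(vi) are needed. When $x_{n+1}$ is adjacent to some whisker $x_{n+i}$, condition (iv) forces $x_{n+1}$ to be adjacent to $x_{i-1}$ and $x_{i+1}$, so deleting $N[x_{n+1}]$ removes two path vertices straddling the surviving $x_i$ (whose own whisker $x_{n+i}$ is also deleted), splitting the path into two shorter whiskered paths and isolating $x_i$; conditions (iii), (v) and (vi) are precisely the closure rules guaranteeing that the edges among the surviving whiskers, and the forced edges at the two boundary whiskers $x_{n+1}$ and $x_{2n}$, transport correctly to the relabelled instances. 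Carrying out this bookkeeping --- a case analysis on which whisker-to-whisker edges occur, with the two end whiskers treated separately --- is the technical heart of the argument and accounts for its length.
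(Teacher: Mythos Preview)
Your outline has a real gap.  The hypotheses (i)--(vi) do \emph{not} say that $G$ is a whiskered path with possibly some whisker--whisker edges; they only list edges that must be present and impose closure conditions on whisker--whisker adjacencies.  Nothing prevents a whisker vertex $x_{n+1}$ from being adjacent to an interior path vertex $x_t$ (with $3\le t\le n$) \emph{without} any accompanying whisker--whisker edge, and there is no closure rule for such edges.  Consequently your description of the residual graph after deleting $N[x_{n+1}]$ --- ``two shorter whiskered paths and an isolated $x_i$'' --- is not justified: $N[x_{n+1}]$ may delete scattered path vertices $x_t$, leaving many fragments, and the surviving pieces need not satisfy (iii)--(vi) after relabelling (for instance, an interior whisker can become a boundary whisker, and the asymmetric conditions (iv)--(vi) do not automatically hold for it).  The same objection applies to $G\setminus N[x_1]$: if $\deg_G(x_{n+2})>1$ the vertex $x_{n+2}$ is not isolated there, and you have not said how to handle this branch.

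The paper's proof is organised quite differently.  It runs a \emph{double} induction, on $n$ and on $d=\deg_G(x_{n+1})+\deg_G(x_{n+2})$.  The base case $d=2$ is exactly the clean situation your argument would handle: both end whiskers are leaves, $x_{n+1}$ is isolated in $G\setminus x_1$, and $x_{n+2}$ is isolated in $G\setminus N[x_1]$, so Theorem~\ref{change}(ii) reduces directly to parameters $n-1$ and $n-2$.  For $d>2$ the paper switches to \emph{edge} deletion (Theorem~\ref{change}(iii)) and carries out a twelve-case analysis on the smallest extra neighbour $x_t$ of $x_{n+1}$ or $x_{n+2}$.  In each case one deletes a specific edge $e$ (or $e'$) so that $G\setminus e$ has smaller $d$ and still satisfies (i)--(vi), and one exhibits an auxiliary graph $H'$ on $2(n-2)$ vertices, built by adding carefully chosen edges to the induced subgraph on $V(G)\setminus\{x_a,x_{n+a},x_b,x_{n+b}\}$, such that $H'$ satisfies (i)--(vi) and $G_e$ is an induced subgraph of $H'$.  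The construction of these $H'$ (and the verification that $G_e$ sits inside them) is precisely what absorbs the stray extra edges your sketch ignores; without it the induction does not close.
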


The following lemma has a crucial role in the proof of our main result.

\begin{lem}\label{main}
Let $n\geq 3$ be an integer and $G$ be a graph with $2n$ vertices, say $\{x_{1},x_{2}, \ldots, x_{2n}\}$. Assume that
\begin{itemize}

\item[(i)] $\big\{\{x_{1},x_{2}\}, \{x_{2},x_{3}\}, \ldots, \{x_{n-1},x_{n}\} ,\{x_{n},x_{1}\}\big\}\subseteq E(G)$.

\item[(ii)] $\{x_{i},x_{n+i}\}\in E(G)$, for every $1\leq i\leq n$.

\item[(iii)] For $1\leq i,j\leq n$, if $x_{n+i}$ is adjacent to $x_{n+j}$ in $G$, then $\{x_{n+i}, x_{j-1}\}$, $\{x_{n+i}, x_{j+1}\}$, $\{x_{n+j}, x_{i-1}\}$ and $\{x_{n+j}, x_{i+1}\}$ are edges of $G$, where we consider the indices $i-1$, $i+1$, $j-1$ and $j+1$ modulo $n$.
\end{itemize}
Then $${\rm reg}(G)\leq \lceil\frac{n+1}{2}\rceil.$$
\end{lem}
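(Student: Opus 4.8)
The plan is to argue by strong induction on $n$, using Lemma \ref{choose} (the ``path'' version of the estimate) to control the term in which the cycle gets broken, and the inductive hypothesis for the term in which the graph genuinely shrinks. For small $n$ the bound can be read off directly from Theorem \ref{change}, so I would assume $n$ large and that the lemma holds for all smaller values. The main step is to apply Theorem \ref{change}(ii) to the cycle vertex $x_1$, which gives
$${\rm reg}(G)\leq \max\{{\rm reg}(G\setminus x_1),\ {\rm reg}(G\setminus N[x_1])+1\},$$
and then to bound the two terms separately.

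For the first term, deleting $x_1$ converts the inner cycle $x_1x_2\cdots x_nx_1$ into the path $x_2x_3\cdots x_n$ on $n-1$ vertices, while the whisker $x_{n+1}$ of $x_1$ loses its only forced edge. If $x_{n+1}$ is isolated in $G\setminus x_1$ it may be discarded; otherwise I would first peel it off by a further application of Theorem \ref{change}(ii) at $x_{n+1}$, whose deletion term is $G\setminus\{x_1,x_{n+1}\}$ and whose neighborhood term is strictly smaller and disposed of by induction. In either case one is left with an induced subgraph on $\{x_2,\ldots,x_n\}\cup\{x_{n+2},\ldots,x_{2n}\}$, that is, on $2(n-1)$ vertices. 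The crucial point is that, under the relabeling $x_{i+1}\mapsto z_i$ and $x_{n+1+i}\mapsto z_{(n-1)+i}$, the single cyclic hypothesis (iii) of the lemma specializes exactly to the six hypotheses (i)--(vi) of Lemma \ref{choose} with parameter $n-1$: the interior implications yield condition (iii), the implications involving the whiskers $x_{n+2}$ and $x_{2n}$ of the new endpoints $x_2$ and $x_n$ yield conditions (iv)--(vi), and the only edges lost in passing to the path (those running through the deleted vertex $x_1$) are precisely the ones that Lemma \ref{choose} does not require. Hence ${\rm reg}(G\setminus x_1)\leq \lceil\frac{(n-1)+2}{2}\rceil=\lceil\frac{n+1}{2}\rceil$.

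For the second term, $N[x_1]$ consists of $x_1,x_2,x_n,x_{n+1}$ together with at most the finitely many outer vertices that hypothesis (iii) forces to be adjacent to $x_1$; no interior vertex $x_3,\ldots,x_{n-1}$ is removed. Thus $G\setminus N[x_1]$ is supported on a proper subpath of $x_3,\ldots,x_{n-1}$ with (some of) its whiskers, and is an induced subgraph of a graph satisfying the hypotheses of Lemma \ref{choose} (equivalently, of the lemma) with a strictly smaller parameter. By Theorem \ref{change}(i) together with the induction hypothesis I would obtain ${\rm reg}(G\setminus N[x_1])\leq \lceil\frac{n-1}{2}\rceil$, so that ${\rm reg}(G\setminus N[x_1])+1\leq \lceil\frac{n+1}{2}\rceil$. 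Combining the two estimates gives the assertion.

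I expect the main obstacle to lie in the first term: verifying meticulously that breaking the cycle at $x_1$ carries the single cyclic condition (iii) onto the four separate conditions (iii)--(vi) of Lemma \ref{choose} under the index shift, and cleanly removing the dangling whisker $x_{n+1}$ so that the residual graph has exactly $2(n-1)$ vertices. It is precisely this parameter shift $n\mapsto n-1$ that yields the sharp bound $\lceil\frac{n+1}{2}\rceil$; a direct edge deletion via Theorem \ref{change}(iii), which would leave a path on all $n$ inner vertices, only gives the weaker $\lceil\frac{n+2}{2}\rceil$ and overshoots by one when $n$ is odd.
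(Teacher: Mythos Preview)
Your overall strategy---vertex deletion at $x_1$ combined with Lemma \ref{choose}---is different from the paper's, which runs a double induction on $n$ and on $|E(G)|$ and uses \emph{edge} deletion via Theorem \ref{change}(iii): when $|E(G)|>2n$ one locates an extra edge $e$, bounds ${\rm reg}(G\setminus e)$ by the edge induction, and embeds $G_e$ into a graph satisfying the same hypotheses with parameter $n-2$. The base $|E(G)|=2n$ is the pure whiskered cycle, handled by Proposition \ref{first}. Your observation that $G\setminus\{x_1,x_{n+1}\}$ satisfies the hypotheses of Lemma \ref{choose} with parameter $n-1$ is correct and is exactly how the paper disposes of the case $n=4$.

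However, the second term of your split contains a genuine gap. You assert that $N[x_1]$ consists only of $x_1,x_2,x_n,x_{n+1}$ and possibly some outer vertices, so that ``no interior vertex $x_3,\ldots,x_{n-1}$ is removed.'' But hypotheses (i)--(iii) only specify which edges \emph{must} be present; they do not forbid any edges. Nothing prevents $G$ from containing $\{x_1,x_5\}$ or $\{x_1,x_{n+7}\}$, and indeed the whole point of the lemma (see its use in Theorem \ref{mmmain}) is to cover the graphs $G'$ arising from even-connections, which typically carry many such unforced edges. Once $x_1$ has extra inner neighbours, deleting $N[x_1]$ can fragment the path $x_3,\ldots,x_{n-1}$ and strip whiskers from surviving vertices, so $G\setminus N[x_1]$ need not embed in a Lemma \ref{choose}-type graph of parameter $\le n-3$. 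The only cheap containment is $G\setminus N[x_1]\subseteq G\setminus\{x_1,x_{n+1}\}$, and Lemma \ref{choose} then gives only $\lceil(n+1)/2\rceil$, overshooting the required $\lceil(n-1)/2\rceil$ by one. The same misconception recurs in your first term when you peel off $x_{n+1}$: the phrase ``strictly smaller and disposed of by induction'' for $(G\setminus x_1)\setminus N[x_{n+1}]$ is not backed by any inductive statement you have set up, since $N[x_{n+1}]$ is again uncontrolled.
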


\begin{proof}
We use induction on the number of vertices of $G$.

If $n=3$, then one can check that $G^{c}$ (the complement graph of $G$) is a chordal graph and thus $\rm reg(G)=2$.
If $n=4$, then $G$ contains a whiskered path of length 4, which satisfies the assumption of lemma \ref{choose} and thus its regularity is at most 3.
Now assume that $n\geq5$. By assumption $\mid E(G)\mid\geq2n$.
If $\mid E(G)\mid=2n$, then $G$ is a whiskered cycle graph and it follows from Proposition \ref{first} that$${\rm reg}(G)=\lceil \frac{n-1}{2}\rceil+1=\lceil\frac{n+1}{2}\rceil.$$
Suppose now that $\mid E(G)\mid\geq2n+1$. By induction on the number of edges of $G$, we prove that ${\rm reg}(G)\leq\lceil\frac{n+1}{2}\rceil$. We consider the following cases.

\vspace{0.2cm}
{\bf Case 1.}
Assume that there exist $1\leq i<j\leq n$ such that $j-i$ is not congruent to one modulo $n$ and $e=\{x_{i},x_{j}\}\in E(G)$. Using induction hypothesis on the number of edges, it follows that$${\rm reg}(G\setminus e)\leq\lceil\frac{n+1}{2}\rceil.$$
By Theorem \ref{change}, it is enough to show that$${\rm reg}(G_{e})\leq\lceil\frac{n-1}{2}\rceil.$$Let $H$ be the induced subgraph of $G$ over the vertices $V(G)\setminus \{x_{i},x_{n+i},x_{j},x_{n+j}\}$ and let $H'$ be the graph with the same vertex set as $H$ and the edge set
\begin{align*}
E(H') & =E(H)\cup\big\{\{x_{i-1},x_{i+1}\},\{x_{j-1},x_{j+1}\}\big\}\cup\big\{\{x_{i-1},x_{n+k}\}\mid 1\leq k\leq n, k\neq i,j\big\}\\ & \cup\big\{\{x_{i+1},x_{n+k}\}\mid 1\leq k\leq n, k\neq i,j\big\}\cup
\big\{\{x_{j-1},x_{n+l}\}\mid 1\leq l\leq n, l\neq i,j\big\}\\ & \cup\big\{\{x_{j+1},x_{n+l}\}\mid 1\leq l\leq n, l\neq i,j\big\},
\end{align*}
where we consider the indices $i-1, i+1, j-1$ and $j+1$ modulo $n$.

One can easily check that $H'$ satisfies the assumptions of the lemma. Thus, by induction on the number of vertices, $${\rm reg}(H')\leq\lceil\frac{(n-2)+1}{2}\rceil=\lceil\frac{n-1}{2}\rceil.$$
Moreover, $G_{e}$ is an induced subgraph of $H'$ which implies that $${\rm reg}(G_{e})\leq {\rm reg}(H')\leq\lceil\frac{n-1}{2}\rceil.$$

\vspace{0.2cm}
{\bf Case 2.}
Assume that there exist $1\leq i<j\leq n$ such that $j-i$ is not congruent to one modulo $n$ and  $e=\{x_{n+i},x_{n+j}\}\in E(G)$. By induction hypothesis on the number of edges, we have$${\rm reg}(G\setminus e)\leq\lceil\frac{n+1}{2}\rceil.$$Let $H$ be the induced subgraph of $G$ over the vertices $V(G)\setminus \{x_{i},x_{n+i},x_{j},x_{n+j}\}$ and let $H'$ be the graph with the same vertex set as $H$ and the edge set
\begin{align*}
E(H') & =E(H)\cup\big\{\{x_{i-1},x_{i+1}\},\{x_{j-1},x_{j+1}\}\big\}\cup\big\{\{x_{i-1},x_{n+k}\}\mid 1\leq k\leq n, k\neq i,j\big\}\\ & \cup\big\{\{x_{i+1},x_{n+k}\}\mid 1\leq k\leq n, k\neq i,j\big\}\cup
\big\{\{x_{j-1},x_{n+l}\}\mid 1\leq l\leq n, l\neq i,j\big\}\\ & \cup\big\{\{x_{j+1},x_{n+l}\}\mid 1\leq l\leq n, l\neq i,j\big\},
\end{align*}
where we consider the indices $i-1, i+1, j-1$ and $j+1$ modulo $n$.

One can easily check that $H'$ satisfies the assumptions of the lemma. On the other hand, it follows from the assumptions $\{x_{n+i},x_{j-1}\}$, $\{x_{n+i},x_{j+1}\}$, $\{x_{n+j},x_{i-1}\}$ and $\{x_{n+j},x_{i+1}\}$ are edges of $G$. Thus, $G_{e}$ is an induced subgraph of $H'$. Hence, the induction hypothesis on $n$ and Theorem \ref{change} show that $${\rm reg}(G_{e})\leq {\rm reg}(H')\leq\lceil\frac{(n-2)+1}{2}\rceil=\lceil\frac{n-1}{2}\rceil.$$
The conclusion that ${\rm reg}(G)\leq\lceil\frac{n+1}{2}\rceil$, follows from Theorem \ref{change}.

\vspace{0.2cm}
{\bf Case 3.}
Assume that there exists $1\leq i\leq n$ such that $e=\{x_{n+i},x_{n+i+1}\}\in E(G)$, (where by $x_{2n+1}$, we mean $x_{n+1}$). Then, by induction hypothesis on the number of edges,$${\rm reg}(G\setminus e)\leq\lceil\frac{n+1}{2}\rceil.$$Let $H$ be the induced subgraph of $G$ over the vertices $V(G)\setminus \{x_{i},x_{n+i},x_{i+1},x_{n+i+1}\}$ and let $H'$ be the graph with the same vertex set as $H$ and the edge set
\begin{align*}
E(H') & =E(H)\cup\big\{\{x_{i-1},x_{i+2}\}\big\}\cup\big\{\{x_{i-1},x_{n+k}\}\mid 1\leq k\leq n, k\neq i,i+1\big\}\\ & \cup\big\{\{x_{i+2},x_{n+k}\}\mid 1\leq k\leq n, k\neq i,i+1\big\},
\end{align*}
where we consider the indices $i-1$ and $i+2$ modulo $n$.

One can easily check that $H'$ satisfies the assumptions of the lemma. Thus, by induction hypothesis on $n$, $${\rm reg}(H')\leq\lceil\frac{(n-2)+1}{2}\rceil=\lceil\frac{n-1}{2}\rceil.$$
By assumptions, $\{x_{n+i},x_{i+2}\}$ and $\{x_{n+i+1},x_{i-1}\}$ are edges of $G$. Hence, $G_{e}$ is an induced subgraph of $H'$. Therefore, by Theorem \ref{change}, $${\rm reg}(G_{e})\leq {\rm reg}(H')\leq\lceil\frac{n-1}{2}\rceil,$$ and then again by Theorem \ref{change}, we conclude that ${\rm reg}(G)\leq\lceil\frac{n+1}{2}\rceil.$

\vspace{0.2cm}
{\bf Case 4.}
Assume that there exist $1\leq i<j\leq n$ such that $j-i$ is not congruent to one modulo $n$ and  $e=\{x_{n+i},x_{j}\}\in E(G)$. By cases 2 and 3, we may assume that for every $1\leq k,l\leq n$ the vertices $x_{n+k}$ and $x_{n+l}$ are not adjacent in $G$. Then, by induction hypothesis on the number of edges,$${\rm reg}(G\setminus e)\leq\lceil\frac{n+1}{2}\rceil.$$Let $H$ be the induced subgraph of $G$ over the vertices $V(G)\setminus \{x_{i},x_{n+i},x_{j},x_{n+j}\}$ and let $H'$ be the graph with the same vertex set as $H$ and the edge set
\begin{align*}
E(H')=E(H)\cup\big\{\{x_{i-1},x_{j-1}\}, \{x_{i+1},x_{j+1}\}\big\},
\end{align*}
where we consider the indices $i-1$, $i+1$, $j-1$ and $j+1$ modulo $n$. By considering the cycle$$x_1, x_2, \ldots, x_{i-1}, x_{j-1}, x_{j-2}, \ldots, x_{i+1}, x_{j+1}, x_{j+2}, \ldots, x_n, x_1$$one easily check that $H'$ satisfies the assumptions of the lemma. On the other hand, $G_{e}$ is an induced subgraph of $H'$. Thus, by induction hypothesis on the number of vertices, we have $${\rm reg}(G_{e})\leq {\rm reg}(H')\leq\lceil\frac{(n-2)+1}{2}\rceil=\lceil\frac{n-1}{2}\rceil.$$
It follows from Theorem \ref{change} that ${\rm reg}(G)\leq\lceil\frac{n+1}{2}\rceil$.

\vspace{0.2cm}
{\bf Case 5.}
Assume that there exists $1\leq i\leq n$ such that $e=\{x_{n+i},x_{i+1}\}\in E(G)$, (where we consider the index $i+1$ modulo $n$). By cases 2 and 3, we may assume that for every $1\leq k,l\leq n$,$$\{x_{n+k},x_{n+l}\}\notin E(G).$$Then, by induction hypothesis on the number of edges,$${\rm reg}(G\setminus e)\leq\lceil\frac{n+1}{2}\rceil.$$Let $H$ be the induced subgraph of $G$ over the vertices $V(G)\setminus \{x_{i},x_{n+i},x_{i+1},x_{n+i+1}\}$ and let $H'$ be the graph with the same vertex set as $H$ and the edge set
\begin{align*}
E(H')=E(H)\cup\big\{\{x_{i-1},x_{i+2}\}\big\},
\end{align*}
where we consider the indices $i-1$ and $i+2$ modulo $n$. One easily check that $H'$ satisfies the assumptions of the lemma. On the other hand, $G_{e}$ is an induced subgraph of $H'$ and by induction hypothesis on $n$, $${\rm reg}(G_{e})\leq {\rm reg}(H')\leq\lceil\frac{(n-2)+1}{2}\rceil=\lceil\frac{n-1}{2}\rceil.$$
Therefore, by Theorem \ref{change}, ${\rm reg}(G)\leq\lceil\frac{n+1}{2}\rceil$.
\end{proof}

To prove the main result, we also need the following two lemmas.

\begin{lem}\label{strong}
Let $G=W(C_{n})$ be a whiskered cycle graph with edge ideal $I=I(G)$ and assume that the vertices of the $n$-cycle (in order) are $x_{1}, \ldots, x_{n}$ and the whiskers are the edges $\{x_1, x_{n+1}\}, \ldots, \{x_n, x_{2n}\}$. Then for a minimal generator $M$ of $I^{s}$, we have $x_{n}^{2}\in(I^{s+1}:M)$ if and only if $n$ is odd, say $n=2m+1$ for some $1\leq m\leq s$ and $$M=(x_{1}x_{2})(x_{3}x_{4}) \ldots (x_{2m-1}x_{2m})N,$$ for some $N\in I^{s-m}$.

Moreover, in this case $x_{n}x_{j}\in(I^{s+1}:M)$ for all $j=1, \ldots, 2n$.
\end{lem}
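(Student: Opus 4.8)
The plan is to convert the condition $x_n^2\in(I^{s+1}:M)$ into a statement about closed walks via Theorem \ref{increase}, and then to read the answer off a rigid linear system on the cycle. Since $(I^{s+1}:M)$ is generated in degree two and $x_n^2$ has degree two, if $x_n^2$ lies in this ideal it is a minimal generator; as $x_n$ bears no loop, Theorem \ref{increase} supplies a walk $p_0=x_n,p_1,\ldots,p_{2l+1}=x_n$ even-connecting $x_n$ to itself with respect to a fixed factorization $M=e_1\cdots e_s$. Let $W_M$ be the multiset of the $l$ edges $\{p_{2k+1},p_{2k+2}\}$ (which, by condition (iii), is dominated edge-by-edge by $\{e_1,\ldots,e_s\}$) and $W_G$ the multiset of the $l+1$ edges $\{p_{2k},p_{2k+1}\}$. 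Telescoping the product of all edge-monomials along the walk gives the identity $\prod W_G=x_n^2\prod W_M$, which is the engine of the whole argument.

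For the forward implication I would examine this identity on $C_n$. Each leaf $x_{n+i}$ has degree one and the walk neither starts nor ends at a leaf, so every visit to $x_{n+i}$ is an excursion $x_i,x_{n+i},x_i$ occupying two consecutive steps of opposite parity; hence each whisker edge occurs equally often in $W_M$ and in $W_G$ and cancels from the identity. Writing $g_i,h_i$ for the multiplicities of the cycle edge $\{x_i,x_{i+1}\}$ in the surviving cycle parts of $W_G,W_M$ and setting $\delta_i=g_i-h_i$, comparison of the exponent of each $x_v$ yields (indices mod $n$) $\delta_{v-1}+\delta_v=0$ for $v\neq n$ and $\delta_{n-1}+\delta_n=2$. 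The recursion forces $\delta_i=(-1)^i\delta_n$ for $1\le i\le n-1$, and the closing relation becomes $\big((-1)^{n-1}+1\big)\delta_n=2$: for even $n$ this is the absurdity $0=2$, so $n$ is odd, say $n=2m+1$, and then $\delta_n=1$, whence $\delta_i=(-1)^i$. At each odd index $i\in\{1,3,\ldots,2m-1\}$ this gives $h_i=g_i+1\ge1$, so the pairwise vertex-disjoint edges $\{x_1,x_2\},\{x_3,x_4\},\ldots,\{x_{2m-1},x_{2m}\}$ all occur in $W_M$, hence in $\{e_1,\ldots,e_s\}$; splitting them off writes $M=(x_1x_2)(x_3x_4)\cdots(x_{2m-1}x_{2m})N$ with $N\in I^{s-m}$ and $1\le m\le s$.

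The reverse implication and the final assertion are constructive. When $n=2m+1$ and $M$ has the displayed form, the path $x_n,x_1,x_2,\ldots,x_{2m},x_n$ even-connects $x_n$ to itself: its required edges $\{x_1,x_2\},\ldots,\{x_{2m-1},x_{2m}\}$ divide $M$, while its free steps $\{x_n,x_1\},\{x_2,x_3\},\ldots,\{x_{2m},x_n\}$ are edges of $G$; thus $x_n^2\in(I^{s+1}:M)$. For the \emph{moreover} part, note first that $I\subseteq(I^{s+1}:M)$, which covers every $x_nx_j$ with $\{x_n,x_j\}\in E(G)$, in particular $j=1,\,n-1$ and the leaf $j=2n$. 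For the remaining cycle vertices I would truncate this walk and its reverse $x_n,x_{2m},\ldots,x_1,x_n$ at a suitable odd position to even-connect $x_n$ to each $x_j$; for a remaining leaf $x_{n+j}$ I would truncate the appropriate walk at the even position carrying $x_j$ and append the whisker $\{x_j,x_{n+j}\}$ as a final free step. In every instance the required edges used lie in the matching and so divide $M$, giving $x_nx_j\in(I^{s+1}:M)$ for all $j$.

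I expect the main obstacle to be the bookkeeping behind the forward implication: one must verify that the two steps of each leaf excursion always have opposite parity (so whisker edges cancel in equal numbers) and that $W_M$ is genuinely dominated by the chosen factorization $e_1\cdots e_s$, since it is this domination that lets one peel off the matching to produce $N\in I^{s-m}$. Once the system for the $\delta_i$ is established, both the parity of $n$ and the exact shape of $M$ fall out of its unique solution.
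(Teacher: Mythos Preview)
Your argument is correct, and for the reverse implication and the ``moreover'' clause it matches the paper's case analysis essentially verbatim. The forward implication, however, is handled differently. The paper simply invokes an argument ``similar to the proof of \cite[Lemma~5.1]{bht}''; as one sees from the paper's own proof of the companion Lemma~\ref{stable}, that style of argument takes a \emph{shortest} even-connected path from $x_n$ to itself, rules out internal repetitions of the endpoint, observes that any simple closed subwalk must be $C_n$ itself, and argues that at most one copy of $C_n$ can appear, forcing the path to be exactly the cycle and hence $n$ to be odd with the displayed shape for $M$. Your route instead never chooses a shortest path: you pass to the monomial identity $\prod W_G=x_n^{2}\prod W_M$, cancel whisker contributions by the parity observation on leaf excursions, and solve the resulting linear system $\delta_{v-1}+\delta_v=0$ ($v\neq n$), $\delta_{n-1}+\delta_n=2$ on the cycle. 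This is a genuinely different, more algebraic mechanism; it is self-contained (no appeal to \cite{bht}), and it reads off both the parity of $n$ and the presence of the matching $\{x_1,x_2\},\ldots,\{x_{2m-1},x_{2m}\}$ in the factorization of $M$ from the unique solution $\delta_i=(-1)^i$, $\delta_n=1$. The paper's approach, by contrast, gives a more explicit combinatorial picture of the extremal walk. Both lead to the same conclusion with comparable effort.
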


\begin{proof}
First assume that $n=2m+1$ is odd and $M=(x_{1}x_{2})(x_{3}x_{4}) \ldots (x_{2m-1}x_{2m})N$ for some $N\in I^{s-m}$. Then $x_{n}^{2}M=(x_{n}x_{1})(x_{2}x_{3}) \ldots (x_{2m}x_{n})N\in I^{s+1}$.

Now, assume that $x_{n}^{2}\in(I^{s+1}:M)$. Using an argument similar to the proof of \cite[ Lemma 5.1]{bht}, one concludes that $n$ is odd, say $n=2m+1$ for some $1\leq m\leq s$ and $$M=(x_{1}x_{2})(x_{3}x_{4}) \ldots (x_{2m-1}x_{2m})N,$$ for some $N\in I^{s-m}$.

By Theorem \ref{increase}, to prove the last statement of the lemma, it is enough to show that for every $j=1, \ldots, 2n$, the vertices $x_{n}$ and $x_{j}$ are even-connected with respect to $M$. By assumptions, there is nothing to prove if $j=n$ or $2n$. Hence assume that $j\neq n ,2n$. We consider the following cases.

\vspace{0.2cm}
{\bf Case 1.}
If $1\leq j< n$ is odd, then $x_{n}, x_1, \ldots, x_{j}$ is an even-connected path between $x_{n}$ and $x_{j}$ with respect to $M$ (remember that $M=(x_{1}x_{2})(x_{3}x_{4}) \ldots (x_{2m-1}x_{2m})N,$ for some $N\in I^{s-m}$).

\vspace{0.2cm}
{\bf Case 2.}
If $1\leq j< n$ is even, then $x_{j}, x_{j+1}, \ldots, x_{n}$ is an even-connected path between $x_{j}$ and $x_{n}$ with respect to $M$ (remember that $n$ is odd).

\vspace{0.2cm}
{\bf Case 3.}
Assume that $n+1\leq j<2n$. If $j$ is even, then $x_{n}, x_{n-1}, \ldots, x_{j-n},x_{j}$ is an even-connected path between $x_{n}$ and $x_{j}$ with respect to $M$. If $j$ is odd, then $x_{n}, x_1, \ldots, x_{j-n}, x_{j}$ is an even-connected path between $x_{n}$ and $x_{j}$ with respect to $M$.

Thus, $x_{n}x_{j}\in(I^{s+1}:M)$ for all $j=1, \ldots, 2n$ and the lemma is proved.
\end{proof}

\begin{lem}\label{stable}
With the same assumptions as Lemma \ref{strong}, $x_{n+1}^{2}\in(I^{s+1}:M)$ if and only if  $n$ is odd, say $n=2m-1$ for some $2\leq m\leq s$ and $$M=(x_{1}x_{2})(x_{3}x_{4}) \ldots (x_{2m-3}x_{2m-2})(x_{2m-1}x_{1})N,$$ for some $N\in I^{s-m}$.

Moreover, in this case $x_{n+1}x_{j}\in(I^{s+1}:M)$ for all $j=1, \ldots, 2n$.
\end{lem}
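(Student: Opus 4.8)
The plan is to derive everything from Theorem \ref{increase}: since $x_{n+1}$ carries no loop, we have $x_{n+1}^2\in(I^{s+1}:M)$ if and only if $x_{n+1}^2M\in I^{s+1}$. I would first record a clean reduction. As $\deg(x_{n+1}^2M)=2(s+1)$ and every monomial of $I^{s+1}$ is divisible by a product of $s+1$ edges, the membership $x_{n+1}^2M\in I^{s+1}$ forces $x_{n+1}^2M$ to be \emph{exactly} a product of $s+1$ edges. Because $x_{n+1}$ is a leaf whose only neighbour is $x_1$, the monomial $x_1x_{n+1}$ is the unique edge containing $x_{n+1}$, so it occurs at least twice among these factors; pulling out two copies gives $x_{n+1}^2M=(x_1x_{n+1})^2Q$ with $Q\in I^{s-1}$, whence $M=x_1^2Q$. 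Conversely $x_{n+1}^2M=(x_1x_{n+1})^2(M/x_1^2)\in I^{s+1}$ as soon as $x_1^2\mid M$ and $M/x_1^2\in I^{s-1}$. Thus
$$x_{n+1}^2\in(I^{s+1}:M)\iff x_1^2\mid M \ \text{and}\ M/x_1^2\in I^{s-1}. \qquad(\star)$$

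For the backward implication I would just exhibit a factorization. Writing $n=2m-1$ and $M=(x_1x_2)(x_3x_4)\cdots(x_{2m-3}x_{2m-2})(x_{2m-1}x_1)N$ with $N\in I^{s-m}$, the underlying monomial equals $x_1^2(x_2x_3\cdots x_n)N$, and re-pairing the cycle variables gives
$$x_{n+1}^2M=(x_1x_{n+1})^2(x_2x_3)(x_4x_5)\cdots(x_{n-1}x_n)N,$$
a product of $2+(m-1)+(s-m)=s+1$ edges, so $x_{n+1}^2M\in I^{s+1}$. Equivalently, the walk $x_{n+1},x_1,x_2,\ldots,x_n,x_1,x_{n+1}$ is an even-connection of $x_{n+1}$ to itself whose $M$-edges are exactly the listed ones.

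For the forward implication I would start from $(\star)$. The parity statement ``$n$ is odd'' comes out cleanly: if $n$ were even then $W(C_n)$ is bipartite, every element of $I^t$ has equal total multiplicity $t$ in each colour class, and since the two removed copies of $x_1$ lie in the same class, passing from $M\in I^{s}$ to $M/x_1^2$ makes the two classes have totals $s-2$ and $s$, so $M/x_1^2\notin I^{s-1}$, contradicting $(\star)$. Hence $n=2m-1$ is odd. To pin down the exact shape of $M$, I would analyse the even-connecting walk $x_{n+1},x_1=p_1,p_2,\ldots,p_{2l}=x_1,x_{n+1}$, whose middle portion is a closed walk at $x_1$ of odd length $2l-1$ along which the alternate edges lie in $M$; since the only odd cycle of $W(C_n)$ is $C_n$ itself, this walk must run once around the $n$-cycle, and a bookkeeping of the alternate ($M$-)edges exactly as in \cite[Lemma 5.1]{bht} (compare Lemma \ref{strong}) forces the edges $\{x_1,x_2\},\{x_3,x_4\},\ldots,\{x_{n-2},x_{n-1}\},\{x_n,x_1\}$ into $M$, forces $s\ge m$, and yields the claimed form with $N\in I^{s-m}$.

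Finally, for the ``moreover'' statement I would, for each $j\neq n+1$, build an explicit even-connection of $x_{n+1}$ to $x_j$ and invoke Theorem \ref{increase}: begin with the whisker $\{x_{n+1},x_1\}$, traverse the cycle from $x_1$ towards $x_j$ in whichever of the two directions makes the total length odd (using the $M$-edges $\{x_1,x_2\},\{x_3,x_4\},\dots$ going forward, or $\{x_1,x_n\},\{x_{n-1},x_{n-2}\},\dots$ going backward), and, when $j>n$, append the whisker $\{x_{j-n},x_j\}$; the case $j=1$ is immediate since $\{x_{n+1},x_1\}$ is already an edge. This parallels the three cases in the proof of Lemma \ref{strong}. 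The main obstacle is the forward implication: ruling out ``wasteful'' even walks and extracting the precise monomial form of $M$ from the walk is the delicate combinatorial core, inherited from \cite[Lemma 5.1]{bht}; the reduction $(\star)$ and the bipartiteness argument are meant to isolate and streamline everything around it.
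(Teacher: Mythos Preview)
Your argument is correct and, at its core, follows the paper's proof: both handle the backward direction by an explicit refactorisation, the forward direction by analysing an even-connection of $x_{n+1}$ to itself with respect to $M$, and the ``moreover'' clause by the same case-by-case construction of even paths (your three cases match the paper's Cases 1--3). Where you deviate is in two auxiliary steps: you isolate the reduction $(\star)$, which the paper does not state separately, and you deduce that $n$ is odd via the bipartiteness of $W(C_n)$ for even $n$, whereas the paper reads the parity directly from the length of a shortest even-connecting walk. The bipartiteness argument is a clean shortcut and genuinely different; on the other hand, for the exact shape of $M$ you, like the paper, return to the walk analysis and to \cite[Lemma~5.1]{bht}, so $(\star)$ does not in the end replace that step. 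One point to tighten: your sentence ``this walk must run once around the $n$-cycle'' requires the walk to be chosen \emph{shortest} (as the paper does explicitly), since an arbitrary odd closed walk at $x_1$ could wind around $C_n$ an odd number of times or include backtracking; once you take a shortest even-connection, the paper's reasoning (no interior occurrence of $x_{n+1}$, the unique simple closed subpath being $C_n$, and at most one copy of $C_n$) yields exactly the form of $M$ you claim.
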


\begin{proof}
First suppose that $n=2m-1$ is odd and$$M=(x_{1}x_{2})(x_{3}x_{4}) \ldots (x_{2m-3}x_{2m-2})(x_{2m-1}x_{1})N$$for some $N\in I^{s-m}$. Then $$x_{n+1}^{2}M=(x_{n+1}x_{1})(x_{2}x_{3}) \ldots (x_{2m-2}x_{2m-1})(x_{1}x_{n+1})N\in I^{s+1}.$$

Now, assume $x_{n+1}^{2}\in(I^{s+1}:M)$. Then Theorem \ref{increase} implies that $x_{n+1}$ is even-connected to itself with respect to $M$. Let $x_{n+1}=p_{0},p_{1}, \ldots, p_{2m+1}=x_{n+1}$ be a shortest even-connected path between $x_{n+1}$ and itself. Assume that there exists $1\leq j\leq 2k$ such that $p_{j}=x_{n+1}$. If $j$ is odd then $x_{n+1}=p_{0}, \ldots, p_{j}=x_{n+1}$ is a shorter even-connected path between $x_{n+1}$ and itself, a contradiction. If $j$ is even then $x_{n+1}=p_{j}, \ldots, p_{2m+1}=x_{n+1}$ is again a shorter even-connected path between $x_{n+1}$ and itself, a contradiction. Thus, we assume that $x_{n+1}$ does not appear in the path $p_{0}, \ldots, p_{2m+1}$ except at its endpoints.

We note that since ${\rm deg}_{G}(x_{n+1})=1$, then the even-connected path $p_{0}, \ldots, p_{2m+1}$ can not be simple. Hence, there exist indices $i$ and $j$ with $1\leq i<j\leq 2k$ such that $p_{i}=p_{j}$ and we choose $i$ and $j$ such that $j-i$ is minimal. Then $p_{i}, \ldots, p_{j}$ is a simple closed path in $G$. This can only occur if this simple path is $C_{n}$. If the path $p_{0}, \ldots, p_{2m+1}$ contains at least two copies of $C_{n}$, then by removing the edges of these two copies, we obtain a shorter even-connected path. Thus, the path $p_{0}, \ldots, p_{2m+1}$ contains exactly one copy of $C_{n}$. This shows that the even-connected path $p_{0}, \ldots, p_{2m+1}$ is of the form $x_{n+1},x_{1},x_{2}, \ldots, x_{n},x_{1},x_{n+1}$. Therefore, $2m+2=n+3$ and hence $n=2m-1$ is odd. By re-indexing if necessary, we may assume that $p_{i}=x_{i}$ for $i=1, \ldots, 2m-1$ and $p_{2m}=x_{1}$. Moreover, by the definition of even-connected path, we have
$$M=(p_{1}p_{2}) \ldots (p_{2m-1}p_{2m})N=(x_{1}x_{2}) \ldots (x_{2m-1}x_{1})N$$ where $N\in I^{s-m}$.

Using Theorem \ref{increase}, in order to prove the last statement of the lemma, we must show that for every $j=1, \ldots, 2n$, the vertices $x_{n+1}$ and $x_{j}$ are even-connected with respect to $M$. By assumptions, there is nothing to prove if $j=1$ or $n+1$. Hence assume that $j\neq 1 ,n+1$. We consider the following cases.

\vspace{0.2cm}
{\bf Case 1.}
If $1< j\leq n$ is odd, then $x_{n+1},x_{1}, \ldots, x_{j}$ is an even-connected path between $x_{n+1}$ and $x_{j}$.

\vspace{0.2cm}
{\bf Case 2.}
If $1< j\leq n$ is even, then $x_{j}, x_{j+1} \ldots, x_{1},x_{n+1}$ is an even-connected path between $x_{j}$ and $x_{n}$.

\vspace{0.2cm}
{\bf Case 3.}
Assume that $n+1< j\leq 2n$.

If $j$ is odd, then $x_{n+1},x_{1},x_{2},x_{3}, \ldots, x_{j-n},x_{j}$ is an even-connected path between $x_{n+1}$ and $x_{j}$.

If $j$ is even, then $x_{n+1},x_{1},x_{n},x_{n-1},x_{n-2}, \ldots, x_{j-n},x_{j}$ is an even-connected path between $x_{n+1}$ and $x_{j}$.

Therefore, $x_{n+1}x_{j}\in(I^{s+1}:M)$ for all $j=1, \ldots, 2n$.
\end{proof}

We are now ready to prove the main result of this paper.
\begin{thm} \label{mmmain}
Let $G=W(C_{n})$ be a whiskered cycle graph and $I=I(G)$ be its edge ideal. Then for all $s\geq 1$, we have
$${\rm reg}(I^{s})=2s+\lceil\frac{n-1}{2}\rceil-1= 2s+{\rm indmatch}(G)-1.$$
\end{thm}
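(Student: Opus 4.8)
The plan is to prove the claimed equality by establishing the two inequalities separately. For the lower bound, the inequality ${\rm reg}(I^{s}) \geq 2s + {\rm indmatch}(G) - 1$ is immediate from the general result of Beyarslan, H\`a and Trung quoted in the introduction, so the content lies entirely in proving the matching upper bound
\[
{\rm reg}(I^{s}) \leq 2s + \left\lceil \frac{n-1}{2} \right\rceil - 1.
\]
The base case $s = 1$ is exactly Proposition \ref{first}, which gives ${\rm reg}(G) = \lceil \frac{n-1}{2}\rceil + 1 = 2 \cdot 1 + {\rm indmatch}(G) - 1$. So I would set up an induction on $s$, aiming to reduce the regularity of $I^{s+1}$ to that of $I^{s}$ together with the polymorphic colon ideals supplied by Banerjee's machinery.

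The engine of the induction is Banerjee's inductive approach to the regularity of powers via Theorem \ref{increase}. The standard consequence of that theory (which I would invoke) is a bound of the form
\[
{\rm reg}(I^{s+1}) \leq \max \Big\{ {\rm reg}(I^{s+1} : M) + 2s,\ {\rm reg}(I^{s}) \Big\},
\]
where $M$ ranges over the minimal monomial generators of $I^{s}$. Given the inductive hypothesis ${\rm reg}(I^{s}) \leq 2s + \lceil \frac{n-1}{2}\rceil - 1 < 2(s+1) + \lceil \frac{n-1}{2}\rceil - 1$, the term ${\rm reg}(I^{s})$ is harmless, so the whole problem collapses to bounding ${\rm reg}(I^{s+1}:M)$ uniformly over all generators $M$. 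Concretely, I would show
\[
{\rm reg}\big(I^{s+1} : M\big) \leq \left\lceil \frac{n-1}{2} \right\rceil - 1 = \left\lceil \frac{n+1}{2}\right\rceil - 2
\]
for every minimal generator $M$ of $I^{s}$; plugging this into the Banerjee bound gives exactly $2s + \lceil \frac{n-1}{2}\rceil - 1 = 2(s+1) + \lceil\frac{n-1}{2}\rceil - 3$, wait—more carefully, the arithmetic must land on $2(s+1) + \lceil\frac{n-1}{2}\rceil - 1$, which forces the target bound on the colon ideal to be ${\rm reg}(I^{s+1}:M) \leq \lceil\frac{n+1}{2}\rceil$. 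This is precisely the number produced by Lemma \ref{main}.

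The crux, therefore, is to identify the polarization / even-connection graph $G^{M}$ attached to the colon ideal $(I^{s+1}:M)$ and to verify that it satisfies hypotheses (i)--(iii) of Lemma \ref{main}, whence ${\rm reg}(G^{M}) \leq \lceil\frac{n+1}{2}\rceil$ and the induction closes. By Theorem \ref{increase}, $(I^{s+1}:M)$ is generated in degree two, with generators corresponding either to the original edges of $G$ or to pairs $\{u,v\}$ that are even-connected with respect to $M$. The edges $\{x_{i},x_{i+1}\}$ of the cycle and the whisker edges $\{x_{i},x_{n+i}\}$ are always present, supplying hypotheses (i) and (ii). The heart of the verification is the combinatorial closure condition (iii): whenever two whisker vertices $x_{n+i}, x_{n+j}$ become even-connected through $M$, one must show that the four ``shifted'' edges $\{x_{n+i},x_{j\pm 1}\}$ and $\{x_{n+j},x_{i\pm 1}\}$ are forced to appear as well. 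This is where I expect the main obstacle to lie, and it is exactly the kind of path-surgery carried out in Lemmas \ref{strong} and \ref{stable}: given an even-connected path realizing $\{x_{n+i},x_{n+j}\}$, one splices in a single step of the cycle at either endpoint (using the edges $\{x_{j-1},x_{j}\}$, $\{x_{j},x_{j+1}\}$, etc.) to produce an even-connected path of the right parity realizing each shifted pair, taking care that condition (iii) of the even-connection definition on edge multiplicities is preserved. Lemmas \ref{strong} and \ref{stable} handle the delicate self-even-connection cases ($x_{n}^{2}$ and $x_{n+1}^{2}$ lying in the colon) that govern when loops, i.e. squared variables, appear; these squared generators are what let $G^{M}$ be treated as a genuine simple graph after polarization. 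Once hypotheses (i)--(iii) are checked for $G^{M}$ in full generality, Lemma \ref{main} delivers the bound and the proof is complete.
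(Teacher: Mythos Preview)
Your outline matches the paper's strategy: lower bound from Beyarslan--H\`a--Trung, base case from Proposition~\ref{first}, induction via Banerjee's bound, and the reduction to showing ${\rm reg}(I^{s+1}:M)\leq\lceil\frac{n+1}{2}\rceil$ by verifying that the even-connection graph satisfies the hypotheses of Lemma~\ref{main}. Your description of the verification of hypothesis~(iii) is also correct (and is exactly how the paper argues it: since $N_G(x_{n+i})=\{x_i\}$, any even-connected path from $x_{n+i}$ must begin $x_{n+i},x_i,\dots$, so one replaces the initial vertex by $x_{i\pm1}$).

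The one place where your sketch is imprecise is the handling of the squared generators. You write that Lemmas~\ref{strong} and~\ref{stable} ``let $G^{M}$ be treated as a genuine simple graph after polarization,'' but that is not their role. Polarization of $(I^{s+1}:M)$ produces an edge ideal $J=I(G')+(x_{i_1}y_{i_1},\dots,x_{i_t}y_{i_t})$ on $2n+t$ vertices, with a new whisker $y_{i_j}$ for each square $x_{i_j}^2$. The graph on these $2n+t$ vertices does \emph{not} satisfy the hypotheses of Lemma~\ref{main}, so you cannot apply the lemma directly. What Lemmas~\ref{strong} and~\ref{stable} actually show is that whenever $x_{i_j}^2\in(I^{s+1}:M)$, the vertex $x_{i_j}$ is adjacent in $G'$ to \emph{every} other vertex $x_l$, $1\leq l\leq 2n$. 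This is then fed into a short deletion argument (using \cite[Lemma~2.10]{dhs} and Theorem~\ref{change}) to peel off the whiskers $y_{i_j}$ one at a time without changing the regularity, yielding ${\rm reg}(J)={\rm reg}(G')$. Only after this reduction does Lemma~\ref{main} apply to $G'$. Without this step your argument has a gap: you have not explained why the extra polarization vertices do not push the regularity above $\lceil\frac{n+1}{2}\rceil$.
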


\begin{proof}
One can easily check that ${\rm indmatch}(G)=\lceil\frac{n-1}{2}\rceil$. Thus, the second inequality is obvious. To prove the first inequality, note that by \cite[Theorem 4.5]{bht}, the inequality $${\rm reg}(I^{s})\geq 2s+{\rm indmatch}(G)-1$$is known. Therefore, we must prove that for every $s\geq 1$ the inequality $${\rm reg}(I^{s})\leq 2s+\lceil\frac{n-1}{2}\rceil-1$$holds. For $s=1$ the above inequality is known (see Proposition \ref{first}). By applying \cite[Theoem 5.2]{b} and using induction on $s$, it is enough to prove that for every $s\geq 1$ and every minimal generator $M$ of $I^{s}$, $${\rm reg}(I^{s+1}:M)\leq\lceil\frac{n-1}{2}\rceil+1.$$
By Theorem \ref{increase}, the ideal $(I^{s+1}:M)$ is generated by the quadratics $uv$, where it is either an edge ideal of $G$ or $u$ and $v$ are even-connected with respect to $M$. Let $J$ denote the polarization of the ideal $(I^{s+1}:M)$. Assume that $x_{i_{1}}^{2}, \ldots, x_{i_{t}}^{2}$  are the non-squarefree minimal generators of $(I^{s+1}:M)$. Then $$J=I(G')+(x_{i_{1}}y_{i_{1}}, \ldots, x_{i_{t}}y_{i_{t}}),$$
where $G'$ is a graph over the vertices ${x_{1}, \ldots, x_{2n}}$ and $y_{i_{1}}, \ldots, y_{i_{t}}$ are new variables. Since polarization does not change the regularity, we have ${\rm reg}(J)={\rm reg}(I^{s+1}:M)$.

On the other hand, since $(I^{s+1}:M)$ has all edges of $G$ as minimal generators, $G$ is a subgraph of $G'$. For every $j=0, \ldots, t$, let $H_{j}$ be the graph whose edge ideal is $$I(G')+(x_{i_{1}}y_{i_{1}}, \ldots, x_{i_{j}}y_{i_{j}}).$$
Then, $H_{0}=G'$ and $J=I(H_{t})$.
By Lemmas \ref{strong} and \ref{stable}, we observe that $\{x_{i_{j}},x_{l}\}\in E(G')$ for every $j=1, \ldots, t$ and every $l=1, \ldots, 2n$ with $i_{j}\neq l$. This implies that for every $1\leq j\leq t$, the graph $H_{j}\setminus N_{H_{j}}[x_{i_{j}}]$ consists of isolated vertices $\{y_{i_{1}}, \ldots, y_{i_{j-1}}\}$.
Hence, $${\rm reg}(H_{j}\setminus N_{H_{j}}[x_{i_{j}}])=0.$$
Now, by \cite[Lemma 2.10]{dhs} we have $${\rm reg}(H_{j})={\rm reg}(H_{j}\setminus x_{i_{j}}).$$
On the other hand, $y_{i_{j}}$ is an isolated vertex in $H_{j}\setminus x_{i_{j}}$ and $H_{j}\setminus \{x_{i_{j}},y_{i_{j}}\}$ is an induced subgraph of $H_{j}\setminus y_{i_{j}}=H_{j-1}$. Hence, $${\rm reg}(H_{j})={\rm reg}(H_{j}\setminus x_{i_{j}})={\rm reg}(H_{j}\setminus \{x_{i_{j}},y_{i_{j}}\})\leq {\rm reg}(H_{j}\setminus y_{i_{j}})={\rm reg}(H_{j-1}).$$
Note that $H_{j-1}$ is an induced subgraph of $H_{j}$ and this implies that ${\rm reg}(H_{j-1})\leq {\rm reg}(H_{j})$. Therefore, we obtain that ${\rm reg}(H_{j})= {\rm reg}(H_{j-1})$
for all $j=1, \ldots, t$.

This, in particular, implies that $${\rm reg}(J)={\rm reg}(H_{t})={\rm reg}(H_{0})={\rm reg}(G').$$
To complete the proof, it is enough to show that ${\rm reg}(G')\leq \lceil\frac{n-1}{2}\rceil+1$.

If $\{x_{n+i},x_{n+j}\}\in E(G')$, for some $1\leq i,j\leq n$, then $x_{n+i}x_{n+j}\in (I^{s+1}:M)$. This means that there exists an even-connected path $x_{n+i}=p_{0},p_{1}, \ldots, p_{2m+1}=x_{n+j}$ in $G$, between $x_{n+i}$ and $x_{n+j}$. Since $N_{G}(x_{n+i})=\{x_{i}\}$, we conclude that $p_{1}=x_{i}$. Thus, $x_{i+1}, p_{1}, \ldots, p_{2m+1}=x_{n+j}$ is an even-connected path between $x_{i+1}$ and $x_{n+j}$. Also, $x_{i-1}, p_{1}, \ldots, p_{2m+1}=x_{n+j}$ is an even-connected path between $x_{i-1}$ and $x_{n+j}$. Similarly, one can show that $x_{n+i}$ is even-connected to both $x_{j-1}$ and $x_{j+1}$ (where we consider the indices $i-1$, $i+1$, $j-1$ and $j+1$ modulo $n$). This shows that$$\{x_{i-1},x_{n+j}\}, \{x_{i+1},x_{n+j}\}, \{x_{j-1},x_{n+i}\}, \{x_{j+1},x_{n+i}\}\in E(G').$$It follows from Lemma \ref{main} that ${\rm reg}(G')\leq\lceil\frac{n+1}{2}\rceil$ and this completes the proof.
\end{proof}

%%%%%%%%%%%%%%%%%%%%%%%%%%%%%%%%%%%%%%%%%%%%%%%%%%%%%%%%%%%%%%%%%%%%%%%%%%

\section{proof of lemma 2.1}

In this section, we prove Lemma \ref{choose}.
\begin{proof}
We use induction on $n$. Since every connected graph with at most four vertices has chordal complement, the result is true for $n\leq 2$. Set$$d={\rm deg}_{G}(x_{n+1})+{\rm deg}_{G}(x_{n+2}).$$We prove the assertion by induction on $d$. Note that $d\geq 2$.

We first consider the case $d=2$. In this case, let $H=G\setminus \{x_{1},x_{n+1}\}$ and note that in the graph $G\setminus x_{1}$, the vertex $x_{n+1}$ is an isolated vertex. Thus, ${\rm reg}(G\setminus x_{1})={\rm reg}(H)$. Hence, by the induction hypothesis on $n$, we get
$${\rm reg}(G\setminus x_{1})={\rm reg}(H)\leq \lceil\frac{(n-1)+2}{2}\rceil\leq\lceil\frac{n+2}{2}\rceil. \ \ \ \ \ \ \ \ \ \ \ \ \ \ (3.1)$$
Let $K$ be the induced subgraph of $G$ over the vertices $V(G)\setminus \{x_{1},x_{n+1},x_{2},x_{n+2}\}$. Clearly, $G\setminus(N[x_{1}]\cup \{x_{n+2}\})$ is an induced subgraph of $K$. So, by Theorem \ref{change}, $${\rm reg}(G\setminus(N[x_{1}]\cup \{x_{n+2}\}))\leq {\rm reg}(K).$$
On the other hand, ${\rm deg}_{G}(x_{n+2})=1$ and $x_{n+2}$ is an isolated vertex in $G\setminus N[x_{1}]$. Thus, $${\rm reg}(G\setminus N[x_{1}])={\rm reg}(G\setminus (N[x_{1}]\cup \{x_{n+2}\})),$$ and hence, ${\rm reg}(G\setminus N[x_{1}])\leq {\rm reg}(K)$.
Moreover, by induction on $n$, we have
$${\rm reg}(K)\leq \lceil\frac{(n-2)+2}{2}\rceil=\lceil\frac{n}{2}\rceil.\ \ \ \ \ \ \ \ \ \ \ \ \ \ (3.2)$$
It then follows from Theorem \ref{change}, together with $(3.1)$ and $(3.2)$ that $${\rm reg}(G)\leq\max\{{\rm reg}(G\setminus x_{1}), {\rm reg}(G\setminus N[x_{1}])+1\}\leq\lceil\frac{n+2}{2}\rceil.$$
Now assume that $d>2$. In this case, there is a vertex $x_{t}\in \{x_{1}, \ldots, x_{n},x_{n+1}, \ldots, x_{2n}\}$ such that either

\begin{itemize}
\item[(i)] $t\neq1$ and $\{x_{n+1},x_{t}\}\in E(G)$ or

\item[(ii)] $t\neq2$ and $\{x_{n+2},x_{t}\}\in E(G)$.
\end{itemize}
Let $t$ be the smallest integer with this property. We consider the following cases.

\vspace{0.2cm}
{\bf Case 1.}
Assume that $e=\{x_{n+1},x_{n+2}\}\in E(G)$. Then $${\rm deg}_{G\setminus e}(x_{n+1})+{\rm deg}_{G\setminus e}(x_{n+2})<{\rm deg}_{G}(x_{n+1})+{\rm deg}_{G}(x_{n+2}).$$
So, by induction hypothesis on $d$, we have $${\rm reg}(G\setminus e)\leq\lceil\frac{n+2}{2}\rceil.$$
Let $H$ be the induced subgraph of $G$ over the vertices $V(G)\setminus \{x_{1},x_{n+1},x_{2},x_{n+2}\}$. Then, by induction on $n$, $${\rm reg}(H)\leq \lceil\frac{(n-2)+2}{2}\rceil=\lceil\frac{n}{2}\rceil.$$
Moreover, $G_{e}$ is an induced subgraph of $H$ and it follows from Theorem \ref{change} that ${\rm reg}(G_{e})\leq {\rm reg}(H)\leq\lceil\frac{n}{2}\rceil$. Hence, by Theorem \ref{change}, we conclude that $${\rm reg}(G)\leq\lceil\frac{n+2}{2}\rceil.$$

\vspace{0.2cm}
{\bf Case 2.}
Assume that $e=\{x_{n+1},x_{2}\}\in E(G)$ and $\{x_{n+1},x_{n+3}\}\notin E(G)$. Then $${\rm deg}_{G\setminus e}(x_{n+1})+{\rm deg}_{G\setminus e}(x_{n+2})<{\rm deg}_{G}(x_{n+1})+{\rm deg}_{G}(x_{n+2}),$$
and since $\{x_{n+1},x_{n+3}\}\notin E(G)$, it follows that $G\setminus e$ satisfies the assumptions of the lemma. So, by induction hypothesis on $d$, we have $${\rm reg}(G\setminus e)\leq\lceil\frac{n+2}{2}\rceil.$$
Let $H$ be the induced subgraph of $G$ over the vertices $V(G)\setminus \{x_{1},x_{n+1},x_{2},x_{n+2}\}$. Then, by induction on $n$, $${\rm reg}(H)\leq \lceil\frac{(n-2)+2}{2}\rceil=\lceil\frac{n}{2}\rceil.$$
Moreover, $G_{e}$ is an induced subgraph of $H$ and it follows from Theorem \ref{change} that ${\rm reg}(G_{e})\leq {\rm reg}(H)\leq\lceil\frac{n}{2}\rceil$. Hence, by Theorem \ref{change}, we conclude that $${\rm reg}(G)\leq\lceil\frac{n+2}{2}\rceil.$$

\vspace{0.2cm}
{\bf Case 3.}
Assume that $e=\{x_{n+1},x_{2}\}$ and $e'=\{x_{n+1},x_{n+3}\}$ are edges of $G$. Since in $G\setminus e'$ we have $${\rm deg}_{G\setminus e'}(x_{n+1})+{\rm deg}_{G\setminus e'}(x_{n+2})<{\rm deg}_{G}(x_{n+1})+{\rm deg}_{G}(x_{n+2}),$$
by induction on $d$, it follows that $${\rm reg}(G\setminus e')\leq\lceil\frac{n+2}{2}\rceil.$$ Using Theorem \ref{change}, to show that ${\rm reg}(G)\leq\lceil\frac{n+2}{2}\rceil$, it suffices to prove that ${\rm reg}(G_{e'})\leq\lceil\frac{n}{2}\rceil$.

Let $H$ be the induced subgraph of $G$ over the vertices $V(G)\setminus \{x_{1},x_{n+1},x_{3},x_{n+3}\}$ and let $H'$ be the graph with the same vertex set as $H$ and the edge set $$E(H')=E(H)\cup\big\{\{x_{2},x_{4}\}\big\}\cup\big\{\{x_{2},x_{n+i}\}\mid 4\leq i\leq n\big\}\cup\big \{\{x_{4},x_{n+i}\}\mid 2\leq i\leq n, i\neq 3\big\}.$$
Then, $H'$ satisfies the assumptions of the lemma. Hence, by induction hypothesis on $n$, we see that $${\rm reg}(H')\leq\lceil\frac{(n-2)+2}{2}\rceil=\lceil\frac{n}{2}\rceil.$$
Moreover, $G_{e'}$ is an induced subgraph of $H'$ (since $x_{2}$ and $x_{4}$ are not vertices of $G_{e'}$), which implies that $${\rm reg}(G_{e'})\leq {\rm reg}(H')\leq\lceil\frac{n}{2}\rceil.$$

\vspace{0.2cm}
{\bf Case 4.}
Assume that $e=\{x_{n+2},x_{1}\}$ is an edge of $G$. Then $${\rm deg}_{G\setminus e}(x_{n+1})+{\rm deg}_{G\setminus e}(x_{n+2})<{\rm deg}_{G}(x_{n+1})+{\rm deg}_{G}(x_{n+2}).$$
Thus, by induction on $d$,  $${\rm reg}(G\setminus e)\leq\lceil\frac{n+2}{2}\rceil.$$
Let $H$ be the induced subgraph of $G$ over the vertices $V(G)\setminus \{x_{1},x_{n+1},x_{2},x_{n+2}\}$. Then, $G_{e}$ is an induced subgraph of $H$ and the induction hypothesis implies that $${\rm reg}(G_{e})\leq {\rm reg}(H)\leq\lceil\frac{n}{2}\rceil.$$
It follows from Theorem \ref{change} that $${\rm reg}(G)\leq\lceil\frac{n+2}{2}\rceil.$$

\vspace{0.2cm}
{\bf Case 5.} Assume that $3\leq t\leq n$ and $e=\{x_{n+1},x_{t}\}\in E(G)$ and $\{x_{n+1},x_{n+t+1}\}\notin E(G)$.

If $t=3$, then $e=\{x_{n+1},x_{3}\}\in E(G)$ and $\{x_{n+1},x_{n+4}\}\notin E(G)$. If $\{x_{n+1},x_{n+2}\}$ is an edge of $G$, then the assertion follows from case $1$.

If $t\geq4$ and $\{x_{n+1},x_{n+t-1}\}$ is an edge of $G$, then by assumption, $\{x_{n+1},x_{t-2}\}$ is an edge of $G$, which is a contradiction by the choice of $t$.

Thus, we assume that $\{x_{n+1},x_{n+t-1}\}\notin E(G)$. Since $\{x_{n+1},x_{n+t-1}\}$ and $\{x_{n+1},x_{n+t+1}\}$ are not edges of $G$, it follows that $G\setminus e$ satisfies the assumptions of the lemma. Now, $${\rm deg}_{G\setminus e}(x_{n+1})+{\rm deg}_{G\setminus e}(x_{n+2})<{\rm deg}_{G}(x_{n+1})+{\rm deg}_{G}(x_{n+2}).$$Thus, by induction on $d$, $${\rm reg}(G\setminus e)\leq\lceil\frac{n+2}{2}\rceil.$$
Let $H$ be the induced subgraph of $G$ over the vertices $V(G)\setminus \{x_{1},x_{n+1},x_{3},x_{n+3}\}$ and let $H'$ be the graph with the same vertex set as $H$ and the edge set $$E(H')=E(H)\cup\big\{\{x_{2},x_{4}\}\big\}\cup\big\{\{x_{2},x_{n+i}\}\mid 4\leq i\leq n\big\}\cup\big\{\{x_{4},x_{n+i}\}\mid 2\leq i\leq n, i\neq 3\big\}.$$
Then $G_{e}$ is an induced subgraph of $H'$ (since $x_{2}$ and $x_{4}$ are not vertices of $G_{e}$). Therefore, by induction on $n$ and Theorem \ref{change}, we have $${\rm reg}(G_{e})\leq {\rm reg}(H')\leq\lceil\frac{(n-2)+2}{2}\rceil=\lceil\frac{n}{2}\rceil.$$
The conclusion that ${\rm reg}(G)\leq\lceil\frac{n+2}{2}\rceil$ now follows from Theorem \ref{change}.

\vspace{0.2cm}
{\bf Case 6.}
Assume that $3\leq t\leq n$ and  $e=\{x_{n+1},x_{t}\}$ and suppose that $e'=\{x_{n+1},x_{n+t+1}\}$ are edges of $G$. Then $${\rm deg}_{G\setminus e'}(x_{n+1})+{\rm deg}_{G\setminus e'}(x_{n+2})<{\rm deg}_{G}(x_{n+1})+{\rm deg}_{G}(x_{n+2}),$$ and by induction on $d$, $${\rm reg}(G\setminus e')\leq\lceil\frac{n+2}{2}\rceil.$$
According to Theorem \ref{change}, to prove that ${\rm reg}(G)\leq\lceil\frac{n+2}{2}\rceil$, it remains to show that $${\rm reg}(G_{e'})\leq\lceil\frac{n}{2}\rceil.$$
Let $H$ be the induced subgraph of $G$ over the vertices $V(G)\setminus \{x_{1},x_{n+1},x_{t+1},x_{n+t+1}\}$ and let $H'$ be the graph with the same vertex set as $H$ and the edge set

\begin{align*}
E(H')=E(H) & \cup\big\{\{x_{t},x_{t+2}\big\}\}\cup\big\{\{x_{t},x_{n+i}\}\mid 2\leq i\leq n, i\neq t+1\big\}\\
 & \cup\big\{\{x_{t+2},x_{n+i}\}\mid 2\leq i\leq n, i\neq t+1\big\}.
\end{align*}

One can easily check that $H'$ satisfies the assumptions of the lemma. Hence by induction on $n$, we conclude that  $${\rm reg}(H')\leq\lceil\frac{(n-2)+2}{2}\rceil=\lceil\frac{n}{2}\rceil.$$
Moreover, $G_{e'}$ is an induced subgraph of $H'$ (since $x_{t}$ and $x_{t+2}$ are not vertices of $G_{e'}$). This implies that ${\rm reg}(G_{e'})\leq {\rm reg}(H')\leq\lceil\frac{n}{2}\rceil$ and the result follows.

\vspace{0.2cm}
{\bf Case 7.}
Assume that $n+3\leq t\leq 2n$ and  $\{x_{n+1},x_{t}\}$ is an edge of $G$. Then, by assumption,  $\{x_{n+1},x_{t-n-1}\}$ is an edge of $G$, which is contradiction by the choice of $t$.

\vspace{0.2cm}
{\bf Case 8.}
Assume that $3\leq t\leq n$ and suppose that $e=\{x_{n+2},x_{t}\}$ and $e'=\{x_{n+2},x_{n+t+1}\}$ are edges of $G$. In the graph $G\setminus e'$, we have $${\rm deg}_{G\setminus e'}(x_{n+1})+{\rm deg}_{G\setminus e'}(x_{n+2})<{\rm deg}_{G}(x_{n+1})+{\rm deg}_{G}(x_{n+2}).$$ So, by induction on $d$, $${\rm reg}(G\setminus e')\leq\lceil\frac{n+2}{2}\rceil.$$
It follows from Theorem \ref{change} that $${\rm reg}(G)\leq \max\{{\rm reg}(G\setminus e'), {\rm reg}(G_{e'})+1\}.$$
Thus, in order to prove that ${\rm reg}(G)\leq\lceil\frac{n+2}{2}\rceil$, it is enough to show that $${\rm reg}(G_{e'})\leq\lceil\frac{n}{2}\rceil. \ \ \ \ \ \ \ \ \ \ \ \ \ \ \ \ \ \ (3.3)$$
Let $H$ be the induced subgraph of $G$ over the vertices $V(G)\setminus \{x_{2},x_{n+2},x_{t+1},x_{n+t+1}\}$ and let $H'$ be the graph with the same vertex set as $H$ and the edge set
\begin{align*}
& E(H') =E(H) \cup\big\{\{x_{1},x_{3}\}, \{x_{t},x_{t+2}\}\big\}\cup\big\{\{x_{1},x_{n+i}\}\mid 3\leq i\leq n, i\neq t+1\big\}\\ & \cup\big\{\{x_{3},x_{n+i}\}\mid 1\leq i\leq n, i\neq 2, t+1\big\}\cup
\big\{\{x_{t},x_{n+i}\}\mid 1\leq i\leq n, i\neq 2, t+1\big\}\\ & \cup\big\{\{x_{t+2},x_{n+i}\}\mid 1\leq i\leq n, i\neq 2, t+1\big\}.
\end{align*}
One can easily check that $H'$ satisfies the assumptions of the lemma. Hence by induction on $n$, $${\rm reg}(H')\leq\lceil\frac{(n-2)+2}{2}\rceil=\lceil\frac{n}{2}\rceil.$$
Moreover, $G_{e'}$ is an induced subgraph of $H'$ (since $x_1, x_3, x_{t}$ and $x_{t+2}$ are not vertices of $G_{e'}$). So, by Theorem \ref{change},$${\rm reg}(G_{e'})\leq {\rm reg}(H')\leq\lceil\frac{n}{2}\rceil$$ and $(3.3)$ holds.

\vspace{0.2cm}
{\bf Case 9.}
Assume that $e=\{x_{n+2},x_{n+3}\}$ is an edge of $G$. By induction on $d$, $${\rm reg}(G\setminus e)\leq\lceil\frac{n+2}{2}\rceil.$$
Now, using Theorem \ref{change} $${\rm reg}(G)\leq \max\{{\rm reg}(G\setminus e), {\rm reg}(G_{e})+1\}.$$
Thus, it is enough to show that $${\rm reg}(G_{e})\leq\lceil\frac{n}{2}\rceil.$$
Let $H$ be the induced subgraph of $G$ over the vertices $V(G)\setminus \{x_{2},x_{n+2},x_{3},x_{n+3}\}$ and let $H'$ be the graph with the same vertex set as $H$ and the edge set $$E(H')=E(H)\cup\big\{\{x_{1},x_{4}\}\big\}\cup\big\{\{x_{1},x_{n+i}\}\mid 4\leq i\leq n\big\}\cup\big\{\{x_{4},x_{n+i}\}\mid 1\leq i\leq n, i\neq 2.3\big\}.$$
One can easily check that $H'$ satisfies the assumptions of the lemma. Therefore, by induction on $n$, we have $${\rm reg}(H')\leq\lceil\frac{(n-2)+2}{2}\rceil=\lceil\frac{n}{2}\rceil.$$
Moreover, $G_{e}$ is an induced subgraph of $H'$ (since $x_1$ and $x_4$ are not vertices of $G_{e}$) which implies that$${\rm reg}(G_{e})\leq {\rm reg}(H')\leq\lceil\frac{n}{2}\rceil.$$

\vspace{0.2cm}
{\bf Case 10.}
Assume that $e=\{x_{n+2},x_{3}\}$ is an edge of $G$ and $\{x_{n+2},x_{n+4}\}$ is not an edge of $G$. Note that$${\rm deg}_{G\setminus e}(x_{n+1})+{\rm deg}_{G\setminus e}(x_{n+2})<{\rm deg}_{G}(x_{n+1})+{\rm deg}_{G}(x_{n+2}).$$
Since $\{x_{n+2}, x_{n+4}\}$ is not an edge of $G$, we conclude that $G\setminus e$ satisfies the assumptions of the lemma. So, by induction on $d$,$${\rm reg}(G\setminus e)\leq\lceil\frac{n+2}{2}\rceil.$$Let $H$ be the induced subgraph of $G$ over the vertices $V(G)\setminus \{x_{2},x_{n+2},x_{3},x_{n+3}\}$ and let $H'$ be the graph with the same vertex set as $H$ and the edge set
\begin{align*}
& E(H')=E(H)\cup\big\{\{x_{1},x_{4}\}\big\}\cup\big\{\{x_{4},x_{n+i}\}\mid 1\leq i\leq n, i\neq 2, 3\big\}\\ & \cup \big\{\{x_{1},x_{n+i}\}\mid \{x_{n+4},x_{n+i}\}\in E(G), i\neq 2, 3\big\}.
\end{align*}
One can easily check that $H'$ satisfies the assumptions of the lemma.
Note that $x_4$ is not a vertex of $G_{e}$. On the other hand, if $\{x_{n+4},x_{n+i}\}\in E(G)$, for some $i\neq 2, 3$, then the assumptions of the lemma implies that $\{x_{3},x_{n+i}\}\in E(G)$. Consequently, $x_{n+i}$ is not a vertex of $G_{e}$. Therefore, $G_{e}$ is an induced subgraph of $H'$. Hence, by Theorem \ref{change} and by induction on $n$, $${\rm reg}(G_{e})\leq {\rm reg}(H')\leq\lceil\frac{(n-2)+2}{2}\rceil=\lceil\frac{n}{2}\rceil.$$
Finally, using Theorem \ref{change}, we have, $${\rm reg}(G)\leq\lceil\frac{n+2}{2}\rceil.$$

\vspace{0.2cm}
{\bf Case 11.}
Assume that $4\leq t\leq n$ and suppose that $e=\{x_{n+2},x_{t}\}$ is an edge of $G$ and $\{x_{n+2},x_{n+t+1}\}$ is not an edge of $G$.

If $t=4$ and $\{x_{n+2},x_{n+3}\}\in E(G)$, then the assertion follows from case 9.

If $t\geq5$ and $\{x_{n+2},x_{n+t-1}\}\in E(G)$, then by assumption, $\{x_{n+2},x_{t-2}\}\in E(G)$, which is a contradiction by the choice of $t$.

Thus, we assume that $\{x_{n+2}, x_{n+t-1}\}\notin E(G)$. In this case, we observe that $${\rm deg}_{G\setminus e}(x_{n+1})+{\rm deg}_{G\setminus e}(x_{n+2})<{\rm deg}_{G}(x_{n+1})+{\rm deg}_{G}(x_{n+2}).$$
Since $\{x_{n+2}, x_{n+t-1}\}$ and $\{x_{n+2},x_{n+t+1}\}$ are not edges of $G$, we conclude that $G\setminus e$ satisfies the assumptions of the lemma. So, by induction on $d$,$${\rm reg}(G\setminus e)\leq\lceil\frac{n+2}{2}\rceil.$$
Let $H$ be the induced subgraph of $G$ over the vertices $V(G)\setminus \{x_{2},x_{n+2},x_{t},x_{n+t}\}$ and let $H'$ be the graph with the same vertex set as $H$ and the edge set
\begin{align*}
& E(H') =E(H)\cup\big\{\{x_{1},x_{t-1}\}, \{x_{3},x_{t+1}\}\big\}\cup\big\{\{x_{t-1},x_{n+i}\}\mid 1\leq i\leq n, i\neq 2, t\big\} \\ & \cup\big\{\{x_{t+1},x_{n+i}\}\mid 1\leq i\leq n, i\neq 2, t\big\} \cup
\big\{\{x_{1},x_{n+i}\}\mid \{x_{n+t-1},x_{n+i}\}\in E(G), i\neq 2, t\big\} \\ & \cup
\big\{\{x_{3},x_{n+i}\}\mid \{x_{n+t+1},x_{n+i}\}\in E(G), i\neq 2, t\big\}.
\end{align*}
By considering the path$$x_1, x_{t-1}, x_{t-2}, \ldots, x_4, x_3, x_{t+1}, x_{t+2}, \ldots, x_n,$$one can easily check that $H'$ satisfies the assumptions of the lemma.
Note that $x_{t-1}$ and $x_{t+1}$ are not vertices of $G_{e}$. On the other hand, if $\{x_{n+t-1},x_{n+i}\}\in E(G)$, for some $i\neq 2, t$, then the assumptions of the lemma implies that $\{x_{t},x_{n+i}\}\in E(G)$. Consequently, $x_{n+i}$ is not a vertex of $G_{e}$. Similarly, if $\{x_{n+t+1},x_{n+i}\}\in E(G)$, for some $i\neq 2, t$, then $x_{n+i}$ is not a vertex of $G_{e}$. Therefore, $G_{e}$ is an induced subgraph of $H'$. Hence, by Theorem \ref{change} and by induction on $n$, $${\rm reg}(G_{e})\leq {\rm reg}(H')\leq\lceil\frac{(n-2)+2}{2}\rceil=\lceil\frac{n}{2}\rceil.$$
Finally, using Theorem \ref{change}, we have, $${\rm reg}(G)\leq\lceil\frac{n+2}{2}\rceil.$$

\vspace{0.2cm}
{\bf Case 12.}
Assume that $n+4\leq t\leq 2n$ and $e=\{x_{n+2},x_{t}\}$ is an edge of $G$. Then by assumption, $\{x_{n+2},x_{t-n-1}\}\in E(G)$, which is a contradiction by the choice of $t$.
\end{proof}

%%%%%%%%%%%%%%%%%%%%%%%%%%%%%%%%%%%%%%%%%%%%%%%%%%%%%%%%%%%%%%%%%%%%%%%%%

%%%%%%%%%%%%%%%%%%%%%%%%%%%%%%%%%%%%%%%%%%%%%%%%%%%%%%%%%%%%%%%%%%%%%%%%%%

\end{document}